\documentclass{article}
\pdfoutput=1
\usepackage{float}
\usepackage[english]{babel}
\usepackage[margin=1in]{geometry}
\usepackage{background}
\usepackage{amssymb}
\usepackage{amsthm}
\usepackage{dsfont}
\usepackage{amsmath, amsfonts}
\usepackage{scrextend}
  \usepackage{paralist}
  \usepackage{graphics} 
  \usepackage{epsfig} 
\usepackage{graphicx}
\usepackage{epstopdf}
\usepackage[font={small,it}]{caption}
\usepackage{subcaption}
\captionsetup[subfigure]{labelfont=rm}
\graphicspath{{figures/}} 

 \usepackage[colorlinks=true]{hyperref}
\hypersetup{urlcolor=blue, citecolor=red}

\SetBgContents{}

\DeclareMathOperator{\id}{id}

\DeclareMathOperator{\N}{N}

\DeclareMathOperator{\supp}{supp}

\newcommand{\sign}{\text{sign}}

\newtheorem{theorem}{Theorem}[section]
\newtheorem*{theorem-non1}{Theorem 1.1}

\newtheorem{lemma}[theorem]{Lemma}
\newtheorem{proposition}[theorem]{Proposition}

\theoremstyle{definition}

\newtheorem{remark}[theorem]{Remark}

\newcommand{\rmd}{\mathrm{d}}
\renewcommand{\epsilon}{\varepsilon}

\newcommand{\fa}          {\quad \text{for all } \,}

\numberwithin{equation}{section}

\begin{document}

%
%
%
%
%

\title{\vspace*{-10mm}
Bifurcation analysis of a stochastically driven limit cycle}
\author{Maximilian Engel\thanks{Zentrum Mathematik der TU M\"{u}nchen,
Boltzmannstr. 3, D-85748 Garching bei M\"{u}nchen} \and Jeroen S.W.~Lamb\thanks{Department of Mathematics, Imperial College London, 180 Queen’s Gate, London SW7 2AZ, United Kingdom
} \and Martin~Rasmussen\footnotemark[2]}
\date{\today}
\maketitle

\bigskip

\begin{abstract}
We establish the existence of a bifurcation from an attractive random equilibrium to shear-induced chaos for a stochastically driven limit cycle, indicated by a change of sign of the first Lyapunov exponent. This addresses an open problem posed by Kevin Lin and Lai-Sang Young in \cite{ly08, y08}, extending results by Qiudong Wang and Lai-Sang Young \cite{wy03} on periodically kicked limit cycles to the stochastic context.

\bigskip

\noindent $\textit{Key words}$. Furstenberg--Khasminskii formula, Lyapunov exponent, Random dynamical system, shear-induced chaos, stochastic bifurcation

\noindent $\textit{Mathematics Subject Classification (2010)}$.
37D45, 37G35, 37H10, 37H15.

\end{abstract}

\section{Introduction} \label{intro}
We consider the following model of a stochastically driven limit cycle
\begin{align} \label{model1}
  \begin{array}{r@{\;\,=\;\,}l}
    \rmd y & - \alpha y \,\rmd t + \sigma \sum_{i=1}^m  f_i(\vartheta) \circ \rmd W_t^i\,,\\
    \rmd \vartheta & (1 + b y) \,\rmd t\ 
 \,,
  \end{array}
\end{align}
where $(y, \vartheta)\in\mathbb{R}\times \mathbb S^1$ are cylindrical amplitude-phase coordinates, $m \in \mathbb N$, and $W_t^i$ for $i\in\{ 1, \dots, m\}$,
denote independent
one-dimensional Brownian motions entering the equation as noise of Stratonovich type. In the absence of noise ($\sigma=0$), the ODE (\ref{model1}) has a globally attracting limit cycle at $y=0$ if $\alpha > 0$. In the presence of noise ($\sigma \neq 0$), the amplitude is driven by phase-dependent noise. The real parameter $b$ induces shear: if $b \neq 0$, the phase velocity $\frac{\rmd\vartheta}{\rmd t}$ depends on the amplitude $y$.

The stable limit cycle turns into a random attractor if $\sigma\not=0$. The main question we address in this paper concerns the nature of this random attractor. The crucial quantity is the sign of the first Lyapunov exponent $\lambda_1$ with respect to the invariant measure associated to the random attractor. In essence, $\lambda_1$ is the dominant infinitesimal asymptotic expansion rate of almost all trajectories.

To facilitate the analysis, we choose $f_i: \mathbb S^1\simeq [0,1) \to \mathbb{R}$ such that
\begin{equation} \label{sumcondition}
 \sum_{i=1}^m f_i'(\vartheta)^2 = 1 \ \fa \vartheta \in \mathbb S^1\,.
\end{equation}
If $m \geq 2$, then the functions are assumed to be smooth.
The simplest example is given by
\begin{equation}  \label{fdef2}
m=2, \quad f_1(\vartheta) = \cos(\vartheta), \quad f_2(\vartheta) = \sin(\vartheta)\,.
\end{equation}
For $m=1$, condition~\eqref{sumcondition} cannot be satisfied for all $\vartheta \in \mathbb S^1$. Hence, we choose $f := f_1 : \mathbb S^1\simeq [0,1) \to \mathbb{R}$ to be continuous and piecewise linear with constant absolute value of the derivative almost everywhere. The simplest example is given by
\begin{equation}  \label{fdef}
f(\vartheta) = \begin{cases}  \vartheta \ & \text{if} \ \vartheta \leq \frac{1}{2}\,, \\(1-\vartheta)\ &  \text{if} \ \vartheta \geq \frac{1}{2}\,. \end{cases}
\end{equation}
With such choices of the amplitude-phase coupling we obtain the following bifurcation result.
\begin{theorem} \label{maintheorem}
Consider the SDE \eqref{model1} with $f_i$, $i=1,\dots,m$, satisfying condition~\eqref{sumcondition}. Then there is $c_0 \approx 0.2823$ such that for all $\alpha > 0$ and $b \neq 0$ the number  $ \sigma_{0}(\alpha, b) = \frac{\alpha^{3/2}}{c_0^{1/2} \left|b\right|} > 0$ is the unique value of $\sigma$ where the top Lyapunov exponent $\lambda_1(\alpha, b,\sigma)$ of \eqref{model1} changes its sign. In more detail, we have
\begin{equation*}
\lambda_1(\alpha, b,\sigma) \begin{cases}
& < 0 \quad \text{if} \ 0 < \sigma < \sigma_0(\alpha, b)\,, \\
& = 0 \quad \text{if} \ \sigma = \sigma_0(\alpha, b)\,, \\
& > 0 \quad \text{if} \ \sigma > \sigma_0(\alpha, b)\,.
\end{cases}
\end{equation*}
\end{theorem}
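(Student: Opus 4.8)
The plan is to compute $\lambda_1$ via the Furstenberg--Khasminskii formula applied to the linearization of \eqref{model1}, and to show this formula reduces to an explicit scalar integral whose sign is governed by a single constant $c_0$. First I would linearize the system along a trajectory: writing the tangent equation for $(\delta y, \delta\vartheta)$, the Stratonovich noise term $\sigma f_i(\vartheta)\circ\rmd W^i_t$ contributes $\sigma f_i'(\vartheta)\,\delta\vartheta\circ\rmd W^i_t$, and the coupling $b y\,\rmd t$ contributes $b\,\delta y\,\rmd t$ to the phase equation. Projectivizing, I introduce an angle variable $s$ on the tangent bundle (or the ratio of the tangent components) and derive its Stratonovich SDE; because of condition~\eqref{sumcondition}, $\sum_i f_i'(\vartheta)^2=1$, many of the $\vartheta$-dependent coefficients collapse to constants, which is precisely why that normalization was imposed. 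The top Lyapunov exponent is then $\lambda_1=\int Q\,\rmd\mu$, where $Q$ is the instantaneous expansion rate on projective space and $\mu$ is the unique stationary measure of the joint process $(y,\vartheta,s)$ (with $\vartheta$ decoupling or appearing only through $f,f'$).

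The second step is to identify $\mu$ explicitly enough to evaluate the integral. The $y$-marginal of the stationary measure: since $y$ obeys a linear SDE $\rmd y=-\alpha y\,\rmd t+\sigma\sum f_i(\vartheta)\circ\rmd W^i$ with, after accounting for the Stratonovich-to-Itô correction and condition-\eqref{sumcondition}-type simplifications, an effective constant diffusion coefficient, $y$ should be Ornstein--Uhlenbeck-like with a Gaussian stationary law of variance proportional to $\sigma^2/\alpha$. The projective variable $s$ and the phase $\vartheta$ need to be handled together; I expect that after a suitable change of variables the stationary density factorizes or satisfies a solvable Fokker--Planck equation, reducing $\lambda_1$ to an integral of the form (constant)$\times b\,\sigma^2/\alpha$ against a universal distribution, minus $\alpha$ (the contraction from the $-\alpha y\,\rmd t$ term). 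Setting this to zero yields $\sigma_0^2=\alpha^3/(c_0 b^2)$, i.e. $\sigma_0=\alpha^{3/2}/(c_0^{1/2}|b|)$, with $c_0$ emerging as an explicit (numerically $\approx 0.2823$) integral constant independent of all parameters.

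The final step is the monotonicity and sign statement. Once $\lambda_1(\alpha,b,\sigma)$ is written as an explicit function — I anticipate something like $\lambda_1 = -\alpha + c_0\,b^2\sigma^2/\alpha^2$ up to rescaling, or more precisely a quantity that, after the scaling $y\mapsto y\sqrt{\alpha}/\sigma$, $t\mapsto t/\alpha$, depends only on the single effective parameter $\kappa = b^2\sigma^2/\alpha^3$ — it is manifestly strictly increasing in $\sigma$ for fixed $\alpha,b$, crossing zero exactly once at $\sigma_0$. The three-case conclusion then follows immediately. I would also note that the sign of $b$ is irrelevant because only $b^2$ enters (the shear acts symmetrically), and $\alpha>0$ guarantees the stationary measure exists and the scaling is valid.

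The main obstacle I expect is the rigorous justification that the Furstenberg--Khasminskii formula applies and that the relevant stationary measure on $(y,\vartheta)\times\mathbb{P}^1$ is unique — i.e.\ establishing ergodicity/hypoellipticity of the augmented process — together with actually solving (or sufficiently characterizing) the stationary Fokker--Planck equation for the projective component to extract the constant $c_0$ in closed enough form to evaluate it numerically. In particular, for the $m=1$ case the function $f$ in \eqref{fdef} is only piecewise linear, so $f''$ is singular and the noise coefficient $f'$ is piecewise constant; handling the resulting discontinuities (and checking that the non-smoothness does not spoil hypoellipticity or the validity of the formula) will require extra care, and I would treat the $m\ge 2$ smooth case first and then argue the piecewise-linear case by approximation or by direct analysis on each linear piece.
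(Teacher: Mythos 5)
Your overall strategy---linearize, projectivize, apply the Furstenberg--Khasminskii formula, use condition~\eqref{sumcondition} to collapse the $\vartheta$-dependence, and reduce the sign of $\lambda_1$ to a single effective parameter $\kappa=b^2\sigma^2/\alpha^3$---is exactly the route the paper takes, and your concern about the piecewise-linear $m=1$ case is addressed there by showing the occupation time of the kink set is a.s.\ zero (Lemma~\ref{zerointegral}, Proposition~\ref{Nullset}). One simplification you miss: under \eqref{sumcondition} the projective angle satisfies an \emph{autonomous} one-dimensional SDE, so $\lambda_1$ coincides with the top Lyapunov exponent of the constant-coefficient linear system~\eqref{specvarEqusim}, and the paper can quote the explicit Imkeller--Lederer formula \eqref{topLyap}--\eqref{msba} instead of solving the stationary Fokker--Planck equation for the coupled process $(y,\vartheta,s)$; the Gaussian $y$-marginal you compute plays no role.

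The genuine gap is in your final step. After rescaling, $\lambda_1$ is not of the form $-\alpha+c_0 b^2\sigma^2/\alpha^2$: it equals $\frac{\alpha}{2}\bigl(\int_0^\infty u\,\tilde m_c(u)\,\rmd u-1\bigr)$, where $\tilde m_c$ is the probability density proportional to $u^{-1/2}\exp\bigl(-c(\tfrac16u^3-\tfrac12u)\bigr)$ and $c=\alpha^3/(\sigma^2b^2)$. Your assertion that this is ``manifestly strictly increasing in $\sigma$'' is precisely the analytic heart of the theorem and is not manifest. The sign of $\lambda_1$ is that of $G(c)=\int_0^\infty(\sqrt u-u^{-1/2})\exp\bigl(-c(\tfrac16u^3-\tfrac12u)\bigr)\rmd u$; differentiating under the integral yields an integrand $h_1h_2e^{ch_2}$ with $h_1=\sqrt u-u^{-1/2}$, $h_2=-\tfrac16u^3+\tfrac12u$, which is positive on $(1,\sqrt3)$ and negative on $(0,1)\cup(\sqrt3,\infty)$, so $G'(c)<0$ only follows after pairing the points $1-\delta$ and $1+\delta$ and comparing $|h_1h_2|$ and $h_2$ there. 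You also supply no argument that the sign change actually occurs, i.e.\ that $G(c)\to+\infty$ as $c\searrow0$ and $G(c)\to-\infty$ as $c\to\infty$; the latter is delicate because as $c\to\infty$ the density concentrates at $u=1$, where the integrand of $G$ vanishes. Without these two pieces neither existence nor uniqueness of $\sigma_0$ is established, so the proposal as written does not prove the theorem.
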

As long as $b,\sigma\neq0$, the amplitude variable $y$ can be rescaled so that the shear parameter becomes equal to $1$ and the effective noise-amplitude becomes $\sigma b$.  Hence, the above result also holds with the roles of $\sigma$ and $b$ interchanged. The fact that $\sigma_{0}(\alpha, b)$ is an increasing function of $\alpha$ is illustrated in Figure~\ref{fig1}. Figure~\ref{fig3d} depicts $\lambda_1$ as a function of $\alpha$ and $\sigma$ for fixed $b=2$. Figure~\ref{fig2d} displays the corresponding areas of positive and negative top Lyapunov exponent in the $(\sigma,\alpha)$-parameter space where $\lambda_1 =0$ along the curve $\{(\sigma_0(\alpha,2), \alpha)\}$ separating the two areas. If $\sigma =0$, we clearly have $\lambda_1 = 0$ for all $\alpha > 0$. The case $\alpha = 0$ is obviously not of any interest in our model.
\begin{figure}[H]
\begin{subfigure}{.5\textwidth}
  \centering
  \includegraphics[width=1\linewidth]{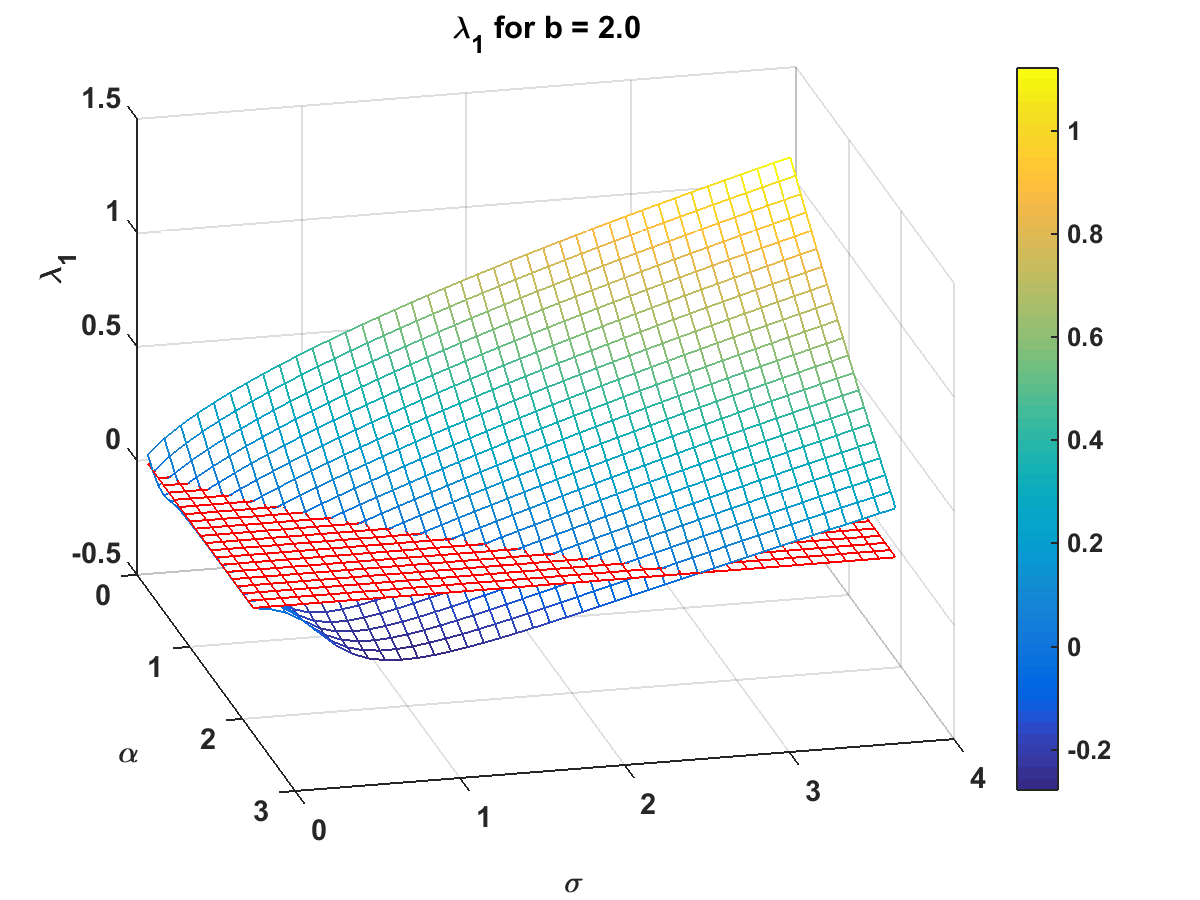}
\caption{}
 \label{fig3d}
\end{subfigure}
\begin{subfigure}{.5\textwidth}
  \centering
  \includegraphics[width=1\linewidth]{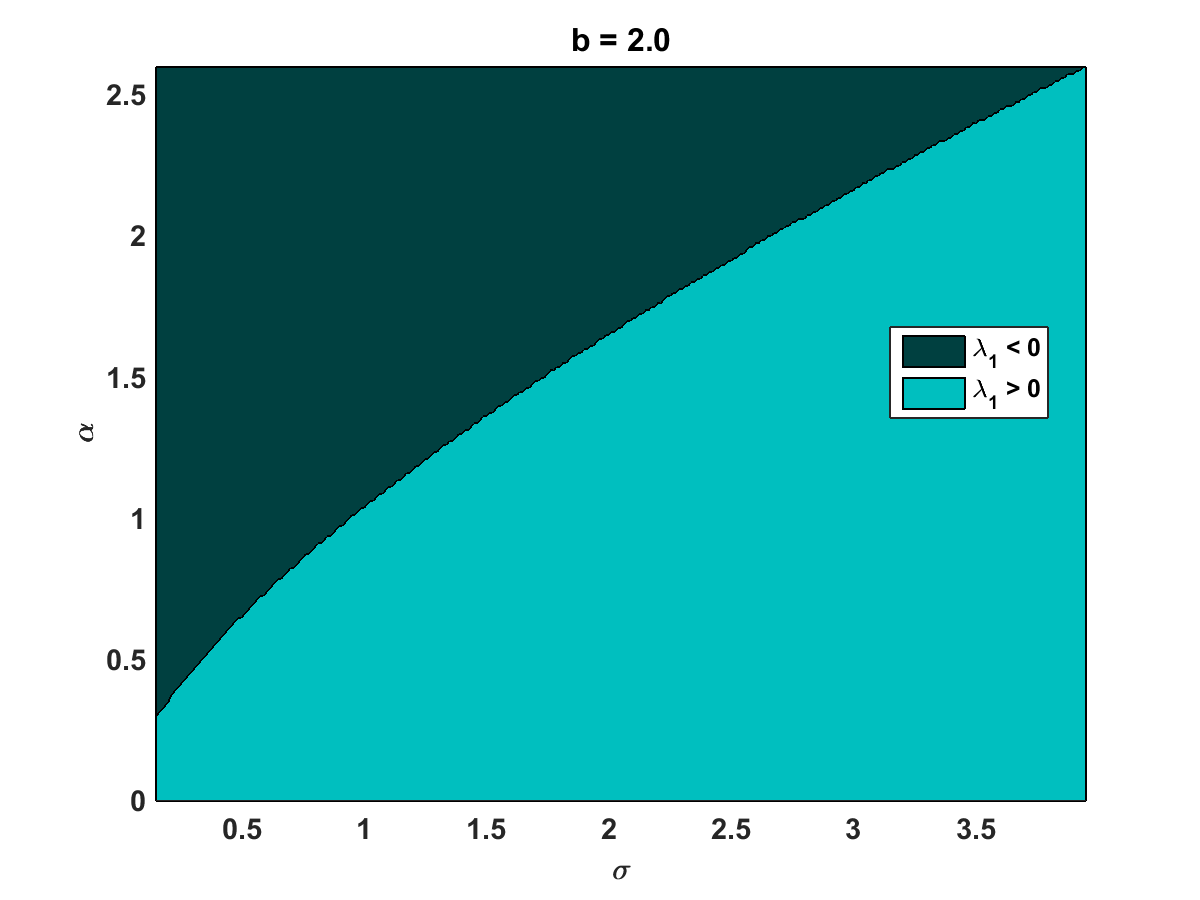}
   \caption{}
  \label{fig2d}
\end{subfigure}
\caption{In Figure~\ref{fig3d} we depict the top Lyapunov exponent $\lambda_1$, computed using (\ref{topLyap}), as a function of  $\alpha$ and $\sigma $ for fixed $b=2$. The red mesh demarcates the level $\lambda_1 =0$. Figure~\ref{fig2d} shows the corresponding areas of positive and negative $\lambda_1$ in the $(\sigma,\alpha)$-parameter space being separated by the curve $\{(\sigma_0(\alpha,2), \alpha)\}$. The picture doesn't display $\sigma =0$: in this case, $\lambda_1 = 0$ for all $\alpha > 0$.}
\label{fig1}
\end{figure}
If the top Lyapunov exponent is negative, it turns out that the (weak) random point attractor is an \emph{attracting random equilibrium}, i.e.~its fibers are singletons almost surely. Properties of random attractors with positive top Lyapunov exponents are not yet well understood, apart from the fact that such attractors are not random equilibria. They are sometimes referred to as \emph{random strange attractors} \cite{ly88, w09}.
Theorem~\ref{maintheorem} confirms numerical results by Lin \& Young \cite{ly08} for a very similar model. The mechanism, whereby a combination of shear and noise causes stretching and folding leading to a positive Lyapunov exponent, was coined with the term \emph{shear-induced chaos} by Lin \& Young. Wang \& Young \cite{wy02,wy03} and  Ott \& Stenlund \cite{os10} have demonstrated analytically the validity of this mechanism in the case of periodically kicked limit cycles, including probabilistic characterizations of the dynamics. An analytical proof of shear-induced chaos in the stochastic setting, as presented in this paper, had remained an open problem.

The results of this paper are part of a larger effort to develop a bifurcation theory of random dynamical systems. Earlier attempts to develop such a theory (notably by Ludwig Arnold, Peter Baxendale and coworkers \cite{a98, ass96, b94, sh96} in the 1990s) led to notions of so-called \emph{phenomenological} (or "P") bifurcations and \emph{dynamical} (or "D") bifurcations, but there is growing evidence that these paradigms do not suffice to capture the intricacies of bifurcation in random dynamical systems \cite{acd16,cdlr16,Lamb_15_1,Zmarrou_07_1}. In the absence of a consensus on useful characterisations of the dynamics of random systems and bifurcations in this context, much of the current research inevitably focusses on the detailed analysis of relatively elementary examples, to generate insights and guidance towards the further development of a more general theory.

In one-dimensional SDEs, negative Lyapunov exponents and attractive random equilibria prevail \cite{cf98}. Random strange attractors can only arise in dimension two and higher and up to now, little research has been devoted to such attractors. In contrast, the existence of attractive random equilibria (also referred to as \emph{synchronization}, with reference to the corresponding dynamics of sets of initial conditions) has been studied well, also in higher dimensions \cite{b91, fgs16, lj87, nj15, nj16}.

The main technical challenge addressed in this paper is to establish the existence of positive top Lyapunov exponents. Most rigorous results on Lyapunov exponents (and random dynamical systems) are obtained for one-dimensional SDEs, in which case the analysis of Lyapunov exponents significantly simplifies due to the fact that all derivatives commute. It is difficult in general to obtain lower bounds for the top Lyapunov exponent in higher dimensions due to the subadditivity property of matrices, cf. \cite{y08}. Thus, the analytical demonstration of  positive Lyapunov exponents for noisy systems has been achieved only in certain special cases, like for equilibria \cite{IL99}, simple time-discrete models as in \cite{ls12} or under special circumstances that allow for the use of stochastic averaging \cite{b04, bg02}. In our setting, condition~\eqref{sumcondition} is crucial to establish rigorous lower bounds on the top Lyapunov exponent $\lambda_1$.

Another prototypical open problem in dimension two is the \emph{stochastic Hopf bifurcation}, concerning the characterisation of dynamics and bifurcations in parametrized families of SDEs that in the deterministic (noise-free) limit display a \emph{Hopf bifurcation}. A (deterministic) Hopf bifurcation occurs if, by the variation of a model parameter, an asymptotically stable equilibrium loses stability under the emission of a small attracting limit cycle. Numerical studies \cite{w09} suggest that the mechanism of shear-induced chaos is at play also in stochastic Hopf bifurcations, but while analytical proofs of parameter regimes with negative top Lyapunov exponents are within reach \cite{dsr11,delr17}, until now, there are no rigorous results concerning the existence of parameter regimes with positive top Lyapunov exponents in this context. The results of this paper may well be relevant to shed more light on this problem.

The remainder of the paper is organized as follows. Section \ref{modelsec} provides the analysis of Lyapunov exponents for our model: Subsection~\ref{GenLyap} introduces the model on the cylinder within the framework of random dynamical systems and establishes the necessary theoretical concepts. Subsection~\ref{FK} introduces the Furstenberg--Khasminskii forumula for the top Lyapunov exponent and in Subsection~\ref{Top Lyapunov}, we derive a formula for the top Lyapunov exponent $\lambda_1$. The main result concerning the change of sign of $\lambda_1$ is proven in Section \ref{bifurresult} and its consequences are discussed. We give illustrations of $\lambda_1$ in dependence on the parameters and confirm a scaling conjecture by Lin and Young.
We conclude with a short summary in Section \ref{summary}.

\section{Analysis of the top Lyapunov exponent} \label{modelsec}
\subsection{Lyapunov exponents for random dynamical systems} \label{GenLyap}
Consider the stochastic differential equation of Stratonovich type
\eqref{model1}. We assume that $f_i:[0,1] \to \mathbb{R}$, $i = 1, \dots, m$, are Lipschitz continuous functions with $f(0)=f(1)$ (smooth if $m \geq 2$ and piecewise linear if $m=1$), and the three parameters fulfill $\alpha >0$, $\sigma > 0$ and $b \in \mathbb R$. Note that the equation reads the same in It\^{o} form according to the It\^{o}--Stratonovich conversion formula.

Since the drift and diffusion coefficients are Lipschitz continuous and satisfy linear growth conditions, the SDE~\eqref{model1} generates a continuous random dynamical system $(\theta, \varphi)$ \cite[Definition 1.1.2]{a98} consisting of the following:
\begin{enumerate}
\item[(i)] A model of the noise on the probability space $\Omega := C_0(\mathbb{R}, \mathbb{R})= \{ \omega \in C(\mathbb{R}, \mathbb{R}): \omega(0) =0 \} $ with Borel $\sigma$-algebra $\mathcal{F}$ and two-sided Wiener measure $\mathbb{P}$, formalized as the family $(\theta_t)_{t \in \mathbb{R}}$ of $\mathbb P$-preserving shift maps given by
$(\theta_t \omega)(s) = \omega(s+t) - \omega(t)$.
\item[(ii)] A model of the system perturbed by noise formalized as a \textit{cocycle} $\varphi$ over $\theta$ of mappings of $\mathbb R \times \mathbb{S}^1$, i.e.~$\varphi$ is a $ \mathcal{B}(\mathbb{R}_0^+)\otimes \mathcal{F} \otimes \mathcal{B}(\mathbb R \times \mathbb{S}^1)$-measurable mapping
\begin{equation*}
\varphi: \mathbb{R}_0^+ \times \Omega \times (\mathbb R \times \mathbb{S}^1) \to \mathbb R \times \mathbb{S}^1, \quad (t, \omega, x) \mapsto \varphi(t, \omega)x,
\end{equation*}
such that $(t, x) \mapsto \varphi(t, \omega)x$ is continuous for every $\omega \in \Omega$ and which satisfies
\begin{displaymath}
  \varphi(0, \omega) = \id \quad \text{and} \quad \varphi(t+s, \omega) = \varphi(t, \theta_s \omega) \circ \varphi(s, \omega) \quad \text{ for all } \omega \in \Omega \text{ and } t, s \in \mathbb{R}_0^+\,.
\end{displaymath}
\end{enumerate}
The random dynamical system $(\theta,\varphi)$ induced by \eqref{model1} is also a skew product flow $\Theta = (\theta, \varphi)$, which is a measurable dynamical system on the extended phase space $\Omega\times X$.
The skew product flow $\Theta$ possesses an ergodic invariant Markov measure $\mu$ which is associated to the unique invariant measure (also called stationary measure) for the corresponding Markov semigroup.
Their existence follows from the same considerations as in \cite{ly08}.

Fundamental for stochastic bifurcation theory is Oseledets' Multiplicative Ergodic Theorem, which implies the existence of Lyapunov exponents describing stability properties of a differentiable random dynamical system. The random dynamical system $(\theta, \varphi)$ is called $C^k$ if $\varphi(t, \omega) \in C^k$ for all $t\in\mathbb R_0^+$ and $\omega\in\Omega$.  In the situation of the Stratonovich SDE
\begin{equation*}
\rmd X_t = f_0(X_t) \rmd t + \sum_{i=1}^m f_j(X_t) \circ \rmd W_t^j
\end{equation*}
on a smooth manifold $X$, the Jacobian $D \varphi(t,\omega,x)$ with respect to the third variable of the cocycle $\varphi(t,\omega)x$ is a linear cocycle over the skew product flow $\Theta=(\theta,\varphi)$. The Jacobian $D \varphi(t,\omega,x)$ applied to an initial condition $v_0 \in T_x X$ solves uniquely the variational equation on $T_x X \cong \mathbb{R}^d$, given by
\begin{equation}\label{varproblem}
  \rmd v = Df_0(\varphi(t,\omega)x) v \,\rmd t + \sum_{j=1}^m Df_j(\varphi(t,\omega)x) v \circ \rmd W_t^j \,, \quad \text{where } v \in T_x X\,.
\end{equation}
Suppose the one-sided $C^1$-random dynamical system $(\varphi, \theta)$ has an ergodic invariant measure $\nu$ and satisfies the integrability condition
\begin{equation*}
\sup_{0 \leq t \leq 1} \log^+ \| D \varphi(t, \omega, x) \| \in L^1(\nu).
\end{equation*}
Then the Multiplicative Ergodic Theorem for differentiable random dynamical systems  \cite[Theorem 3.4.1, Theorem 4.2.6]{a98} guarantees the existence of a $\Theta$-forward invariant set $\Delta \subset \Omega \times X$ with $\nu (\Delta) = 1$ and  the Lyapunov exponents $\lambda_1 > \dots > \lambda_p$ with respect to $\nu$.
The tangent space $T_x X \cong \mathbb{R}^d$ admits a filtration
\begin{displaymath}
  \mathbb R^d = V_1(\omega, x) \supsetneq V_2(\omega,x)\supsetneq \dots \supsetneq V_p(\omega,x) \supsetneq V_{p+1}(\omega,x)= \{0\}\,,
\end{displaymath}
such that for all $0 \neq v \in T_x X \cong \mathbb{R}^d$, the Lyapunov exponent $\lambda(\omega, x, v)$ defined by
\begin{equation*}
\lambda(\omega, x, v) = \lim_{t \to \infty} \frac{1}{t} \log \| D \varphi(t, \omega, x)v \|
\end{equation*}
exists and
\begin{equation*}
\lambda (\omega, x, v) = \lambda_{i}\quad \Longleftrightarrow \quad v \in V_i(\omega,x) \setminus V_{i+1}(\omega,x) \quad \text{ for all } i\in\{1, \dots, p\}\,.
\end{equation*}
%
\subsection{The Furstenberg--Khasminskii formula} \label{FK}

In the following, we calculate the top Lyapunov exponent $\lambda_1$ for the random dynamical system induced by \eqref{model1}. We consider the corresponding variational equation describing the flow on the tangent space $T_x(\mathbb{R}\times \mathbb S^1)\cong \mathbb{R}^2$ along trajectories of \eqref{model1}. The variational equation reads as
\begin{equation} \label{varEqu}
\rmd v = \underbrace{\begin{pmatrix}
- \alpha & 0 \\ b & 0
\end{pmatrix}}_{=:A} v \,\rmd t + \sigma \sum_{i=1}^m \underbrace{\begin{pmatrix}
0 &  f_i'(\vartheta) \\ 0 & 0
\end{pmatrix}}_{=:B_i} v \circ \rmd W_t^i\,.
\end{equation}
Note that we omit the $(t,\omega)$-dependence of $\vartheta$ and $B$. Because of the linearity of \eqref{varEqu}, we introduce the change of variables $r = \|v\|$ and $s = v/r$, so that $s$ lies on the unit circle. Its dynamics are given by
\begin{align*}
\rmd s &= ( As - \langle s, As \rangle s)\, \rmd t + \sum_{i=1}^m (B_is - \langle s, B_is \rangle s)\, \circ \rmd W_t^i \\
&= \begin{pmatrix}
- \alpha s_1 - s_1(-\alpha s_1^2 + b s_1 s_2) \\ b s_1 - s_2(-\alpha s_1^2 + b s_1 s_2)
\end{pmatrix} \rmd t + \sigma \sum_{i=1}^m \begin{pmatrix}
 f_i'(\vartheta) s_2 - s_1 f_i'(\vartheta) s_1 s_2 \\
-s_2  f_i'(\vartheta) s_1 s_2
\end{pmatrix} \circ \rmd W_t^i\,.
\end{align*}
The Furstenberg--Khasminskii formula for the top Lyapunov exponent \cite{IL99} is given by
\begin{equation}\label{fkformula}
\lambda_1 = \int_{\mathbb{R}}\int_{[0,1]}\int_{\mathbb S^1} ( h_A(s) + \sum_{i=1}^m k_{B_i}(s) )\, \rho(\rmd s, \rmd\vartheta, \rmd y),
\end{equation}
where $\rho$ is the joint invariant measure for the diffusion $s$ on the unit circle and the processes $\vartheta$ and $y$ induced by \eqref{model1}; the functions $h_A$ and $k_{B_i}$, $i = 1, \dots, m$, are given by
\begin{align*}
h_A(s) &= \langle s, As \rangle = - \alpha s_1^2 + b s_1 s_2\,,\\
k_{B_i}(s) &= \frac{1}{2} \langle \left( B_i + B_i^* \right)s, B_is \rangle - \langle s, B_is \rangle^2 = \frac{1}{2} \sigma^2 f_i'(\vartheta)^2 s_2^2 - \sigma^2 f_i'(\vartheta)^2 s_1^2 s_2^2\,.
\end{align*}
Similarly to the calculations in \cite{IL99}, we change variables to $s = (\cos \phi, \sin \phi)$. Note that the functions $h_A$ and $k_{B_i}$ are $\pi$-periodic, which implies that the formula \eqref{fkformula} for the top Lyapunov exponent reads as
\begin{equation} \label{GeneralLyap}
  \lambda_1 = \int_{\mathbb{R}\times[0,1]\times [0,\pi]} \left(  - \alpha \cos^2 \phi + b \cos \phi \sin \phi + \sum_{i=1}^m  f_i'(\vartheta)^2 \left[\frac{1}{2}\sigma^2 (1 - 2 \cos^2 \phi)\sin^2 \phi \right] \right) \tilde{\rho}(\rmd \phi, \rmd \vartheta, \rmd y),
\end{equation}
where $\tilde{\rho}$ denotes the corresponding image measure of $\rho$.
The SDE determining the dynamics of $ \phi \in [0, \pi)$ reads as
\begin{equation} \label{GeneralPhi}
  \rmd \phi = - \frac{1}{\sin \phi} \rmd s_1 = (\alpha \cos \phi \sin \phi + b \cos^2 \phi) \rmd t - \sum_{i=1}^m \sigma f_i'(\vartheta) \sin^2 \phi  \circ \rmd W_t^i\,,
\end{equation}
where we denote
\begin{equation}\label{cd}
  c_i(\phi, \vartheta) = \sigma f_i'(\vartheta) \sin^2 \phi \quad \text{and}\quad d(\phi) = \alpha \cos \phi \sin \phi + b \cos^2 \phi  \,.
\end{equation}
In the Fokker--Planck equation for $\phi$, the dependence on $\vartheta$ is restricted to $ \sum_{i=1}^m f_i'(\vartheta)^2$,
and in addition to that, the integrand of \eqref{GeneralLyap} only depends on $\phi$ and not on $\vartheta$ and $y$ if $ \sum_{i=1}^m f_i'(\vartheta)^2$ is constant. This means that the calculation of $\lambda_1$ becomes much simpler if $\sum_{i=1}^m f_i'(\vartheta)^2$ is constant, an observation that we exploit in the following.

\subsection{Explicit formula for the top Lyapunov exponent} \label{Top Lyapunov}

Firstly, we have to justify the analysis of the top Lyapunov exponent for the case $m=1$ where $f:=f_1: [0,1] \to \mathbb{R}$ is given by \eqref{fdef}. Importantly, $f'(\vartheta)^2$ is constant in this special case and our results hold in fact for every continuous and piecewise linear $f$ with constant absolute value of the derivative almost everywhere.

The map is not differentiable at $\frac{1}{2}$ and $0$, and we verify that does not cause any problems. We need the following results to justify the variational equation defining $D \varphi$:
\begin{lemma} \label{zerointegral}
  Let $W : \mathbb{R}^+_0 \times \Omega \to \mathbb{R}$ denote the canonical real-valued Wiener process, and let $X: \mathbb{R}^+_0 \times \Omega \to [0,1]$ be a stochastic process adapted to the natural filtration of the Wiener process. Furthermore, suppose there exists a measurable set $A \subset [0,1]$ such that
  \begin{equation}  \label{hittingtime}
    \mathbb{P} \big(\big\{ \omega\in\Omega: \textstyle \int_0^t \mathds 1_{ \{X_u \in A\}} \ \rmd u = 0  \big\}\big) = 1 \quad \text{for all }t >0\,,
  \end{equation}
  i.e.~$A$ is visited only on a measure zero set with full probability.
  Consider a measurable function $g: [0,1] \to [0,1]$ such that $g = 0$ on $[0,1] \setminus A$. Then
  \begin{equation*}
  \int_0^t g(X_u) \,\rmd W_u = 0 \quad \text{almost surely for all }t > 0\,.
  \end{equation*}
\end{lemma}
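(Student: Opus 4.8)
The plan is to compute the second moment of the stochastic integral via the It\^o isometry, bound it by the occupation time of $A$, which vanishes by hypothesis~\eqref{hittingtime}, and then pass from ``for each fixed $t$'' to ``for all $t$'' using path continuity of the integral.

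First I would observe that $u\mapsto g(X_u)$ is a legitimate integrand: since $X$ is adapted and $g$ is Borel measurable, $g(X_\cdot)$ is adapted, and it is bounded by $1$ because $g$ takes values in $[0,1]$; replacing it by a progressively measurable modification (which alters neither the It\^o integral nor the occupation-time integrals below, as modifications agree almost surely at each time), the process
\[
M_t := \int_0^t g(X_u)\,\rmd W_u
\]
is a well-defined continuous square-integrable martingale.

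Next, fix $t>0$ and apply the It\^o isometry followed by Tonelli's theorem (the integrand is nonnegative):
\[
\mathbb{E}\big[M_t^2\big] \;=\; \mathbb{E}\Big[\int_0^t g(X_u)^2\,\rmd u\Big] \;=\; \int_0^t \mathbb{E}\big[g(X_u)^2\big]\,\rmd u .
\]
Since $g\equiv 0$ on $[0,1]\setminus A$ and $0\le g\le 1$, we have the pointwise inequality $g(X_u)^2 = g(X_u)^2\,\mathds 1_{\{X_u\in A\}} \le \mathds 1_{\{X_u\in A\}}$, so
\[
\mathbb{E}\big[M_t^2\big] \;\le\; \mathbb{E}\Big[\int_0^t \mathds 1_{\{X_u\in A\}}\,\rmd u\Big] \;=\; 0
\]
by~\eqref{hittingtime}. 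Hence $M_t=0$ almost surely, for every fixed $t>0$.

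Finally, to obtain a single null set outside of which $M_t=0$ for all $t$, I would choose a countable dense set $D\subset(0,\infty)$: then almost surely $M_t = 0$ for all $t\in D$, and since $t\mapsto M_t$ has almost surely continuous paths, $M_t\equiv 0$ on $[0,\infty)$ almost surely, as claimed. There is no genuine obstacle in this argument; the only points needing a little care are the progressive measurability of the integrand and the legitimacy of interchanging expectation and time integration, both standard. (Should one wish to read $\rmd W_u$ in the Stratonovich sense, the It\^o--Stratonovich correction term equals the quadratic covariation of $W$ with $\int_0^\cdot g(X_u)\,\rmd W_u$, which is identically zero since that integral vanishes, so the conclusion is unaffected.)
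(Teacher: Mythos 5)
Your proof is correct and follows essentially the same route as the paper: the It\^o isometry gives $\mathbb{E}[M_t^2]=\mathbb{E}[\int_0^t g(X_u)^2\,\rmd u]=0$ by the occupation-time hypothesis, hence $M_t=0$ almost surely. The extra care you take about progressive measurability and upgrading from each fixed $t$ to all $t$ via path continuity is a sound refinement of the paper's more terse argument, not a different approach.
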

\begin{proof}
The statement follows directly from It\^{o}'s isometry
\begin{equation*}
\mathbb{E} \left[ \left( \int_0^t g(X_u) \rmd W_u \right)^2  \right] = \mathbb{E} \left[  \int_0^t g(X_u)^2 \rmd u \right] = \mathbb{E} \left[  \int_0^t \Big(g(X_u)^2 \mathds 1_{\{ X_u \in A \}} +  g(X_u)^2 \mathds 1_{ \{ X_u \in [0,1] \setminus A \}} \Big) \, \rmd u \right] = 0\,,
\end{equation*}
where the last equality follows immediately from (\ref{hittingtime}) and $g=0$ on $[0,1]\setminus A$. We conclude
\begin{equation*}
  \left(\int_0^t g(X_u) \,\rmd W_u \right)^2 =0  \quad  \text{almost surely}
\end{equation*}
due to nonnegativity, and the claim follows.
\end{proof}

\begin{proposition} \label{Nullset}
  Let $f'$ denote the weak derivative of $f$ as given by (\ref{fdef}). Then the choice of representative of $f'$ by determining $f'(\frac{1}{2})$ and $f'(0)$ does not affect the solution to the variational equation (\ref{varEqu}).
\end{proposition}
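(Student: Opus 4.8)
The plan is to route the whole statement through Lemma~\ref{zerointegral}, applied to the phase process $\vartheta$ and the finite set $A=\{0,\tfrac12\}\subset\mathbb S^1\simeq[0,1)$ on which the representative of $f'$ is not pinned down (for $m\ge 2$ the $f_i$ are smooth and there is nothing to prove, so we work with $m=1$ and $f$ as in~\eqref{fdef}; more generally $A$ is the finite set of non-smoothness points of a piecewise-linear $f$). Let $f'$ and $\widehat f{}'$ be two representatives of the weak derivative; they agree outside $A$, so $g:=f'-\widehat f{}'$ is a bounded measurable function vanishing on $[0,1)\setminus A$. In~\eqref{varEqu} the derivative enters only through the matrix $B_1=B_1(\vartheta)$ in the noise term, whose sole nonzero entry is $f'(\vartheta)$, and since $B_1^2=0$ (and $\vartheta$ has finite variation) the It\^o--Stratonovich correction vanishes, exactly as already noted for~\eqref{model1}. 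Thus, if $v$ denotes the solution of~\eqref{varEqu} built with $f'$, then in It\^o form
\begin{equation*}
v_t = v_0 + \int_0^t A v_u\,\rmd u + \sigma\int_0^t \begin{pmatrix} f'(\vartheta_u)\,(v_u)_2 \\ 0\end{pmatrix}\rmd W_u ,
\end{equation*}
and replacing $f'$ by $\widehat f{}'$ changes the integrand only by $g(\vartheta_u)\,(v_u)_2$ in the first component.

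By pathwise uniqueness for the linear SDE~\eqref{varEqu} with bounded coefficients (recall $|f_i'|\le 1$), it then suffices to show that $\int_0^t g(\vartheta_u)(v_u)_2\,\rmd W_u=0$ almost surely for every $t>0$: once this holds, $v$ also solves the $\widehat f{}'$-version of~\eqref{varEqu}, hence coincides with its solution. This is precisely the conclusion of Lemma~\ref{zerointegral}, whose proof applies verbatim with the bounded, continuous, adapted prefactor $(v_u)_2$ included — after a localisation making $(v_u)_2$ bounded, It\^o's isometry gives that the second moment of the integral equals $\mathbb E\int_0^{t}g(\vartheta_u)^2(v_u)_2^2\,\mathds 1_{\{\vartheta_u\in A\}}\,\rmd u$, because $g$ vanishes off $A$ — provided hypothesis~\eqref{hittingtime} holds for $X=\vartheta$ and this $A$. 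So the whole statement reduces to the occupation estimate
\begin{equation*}
\int_0^t \mathds 1_{\{\vartheta_u\in\{0,\,1/2\}\}}\,\rmd u = 0 \qquad\text{almost surely, for all } t>0 .
\end{equation*}

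To establish this I would use Fubini: $\mathbb E\int_0^t\mathds 1_{\{\vartheta_u\in\{0,1/2\}\}}\,\rmd u=\int_0^t\mathbb P(\vartheta_u\in\{0,\tfrac12\})\,\rmd u$, so it is enough that the law of $\vartheta_u$ is atomless for a.e.\ $u\in(0,t]$. This follows from the fact that the Markov process $(y_u,\vartheta_u)$ generated by~\eqref{model1} admits a transition density for positive times — resting on the same (hypo)ellipticity-type considerations that yield the stationary density, cf.~\cite{ly08} — so that $\vartheta_u$ has an absolutely continuous one-dimensional marginal. The geometry makes this plausible: $u\mapsto\vartheta_u$ is $C^1$ with $\dot\vartheta_u=1+by_u$, hence the level set $\{u\le t:\vartheta_u\in\{0,1/2\}\}$ consists of isolated transversal crossings (a countable, hence null, set) together with times at which $1+by_u=0$; modulo a countable set it is therefore contained in $\{u\le t:y_u=-1/b\}$ (empty if $b=0$), and the amplitude $y$, whose drift $-\alpha y$ does not vanish at $y=-1/b$, should not spend positive time at that level.

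I expect the main obstacle to be exactly this last input: verifying that $\vartheta_u$ (equivalently $y_u$) has an atomless marginal for $u>0$, given that the amplitude noise $\sigma f(\vartheta)$ degenerates precisely at the phases $\vartheta\in\{0,1\}$ where $f$ vanishes, and that $f$ is only piecewise linear so that smooth Hörmander theory is not directly available. Everything else — the It\^o reformulation of~\eqref{varEqu}, the reduction by pathwise uniqueness, and the mild extension of Lemma~\ref{zerointegral} to accommodate the bounded adapted prefactor — is routine.
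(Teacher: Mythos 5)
Your overall architecture coincides with the paper's: reduce the claim to Lemma~\ref{zerointegral} applied to $X=\vartheta$ and $A=\{0,\tfrac12\}$, and then prove the occupation-time identity $\int_0^t \mathds 1_{\{\vartheta_u\in A\}}\,\rmd u=0$ a.s.\ by exploiting that $\vartheta$ is $C^1$ with $\dot\vartheta=1+by$. Your treatment of the stochastic-integral step is in fact more careful than the paper's: the paper applies Lemma~\ref{zerointegral} directly to $\int_0^t f_i'(\vartheta_u)\,\rmd W_u$ and then simply asserts $v_t=w_t$, whereas you insert the adapted prefactor $(v_u)_2$, localise, and invoke pathwise uniqueness for the linear SDE; that refinement is needed and unproblematic.

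The one place you stop short is exactly where the paper does its (brief) work: showing that $y$ spends zero time at the level $-1/b$ while $\vartheta$ sits at a kink. You write that $y$ ``should not spend positive time at that level'' and then propose, as a fallback, atomlessness of the law of $\vartheta_u$ via transition densities --- correctly noting yourself that the degeneracy of the noise where $f(\vartheta)=0$ makes that route delicate. The detour is unnecessary. The paper closes the step pathwise and by contradiction: if $\int_0^t\mathds 1_{\{\vartheta_u=1/2\}}\,\rmd u>0$ with positive probability then, $\vartheta$ being continuously differentiable, one is forced onto a time set of positive measure (the paper phrases it as an interval) on which $\vartheta\equiv\tfrac12$ and hence $y\equiv-\tfrac1b$; but there $y$ solves $\rmd y=-\alpha y\,\rmd u+\sigma f(\tfrac12)\,\rmd W_u$ with $f(\tfrac12)=\tfrac12\neq0$, i.e.\ it is a nondegenerate Ornstein--Uhlenbeck-type process and cannot be constant. (At $\vartheta=0$ the noise does vanish, since $f(0)=0$, but then $y\equiv-\tfrac1b$ would force the drift $-\alpha y=\alpha/b$ to vanish, which it does not because $\alpha>0$.) So no hypoellipticity or marginal-density input is required: your geometric sketch in the last paragraph is precisely the paper's argument, and it only remains to replace your ``should'' by this elementary contradiction.
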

\begin{proof}
First, we show that
\begin{equation*}
 \mathbb{P} \big(\big\{ \omega\in\Omega: \textstyle \int_0^t \mathds 1_{ \{\vartheta_u = 1/2 \}} \ \rmd u = 0  \big\}\big) = 1 \quad \text{for all }t >0
\end{equation*}
by assuming the contrary to obtain a contradiction. As $\vartheta$ is a continuously differentiable process, this implies that $\vartheta_u = \frac{1}{2}$ for $u \in [t^*, t^* + \epsilon]$ for some $t^* \in (0,t)$ and $\epsilon >0$ with positive probability. This leads to $y(u) = -\frac{1}{b} \mod 1$ for $u \in (t^*, t^* + \epsilon)$ with positive probability. However, this implies that the continuous process $y_u$ for $u \in (t^*, t^* + \epsilon)$ given by
\begin{equation*}
  \rmd y = - \alpha y \,\rmd u + \sigma \,\rmd W_u
\end{equation*}
is constant with positive probability. This contradicts its definition as an Ornstein--Uhlenbeck process. The same reasoning obviously holds for $\theta = 0$.

Let $f_1' = f_2' = f'$ on $(0,1) \setminus \{\frac{1}{2}\}$ and assign arbitrary values at $\frac{1}{2}$ and $0$. Define
\begin{align*}
\rmd v &= \begin{pmatrix}
- \alpha & 0 \\ b & 0
\end{pmatrix} v \,\rmd t + \begin{pmatrix}
0 & \sigma f_1^{'}(\vartheta) \\ 0 & 0
\end{pmatrix} v \circ \rmd W_t^1\,,\\
d w &= \begin{pmatrix}
- \alpha & 0 \\ b & 0
\end{pmatrix} w \,\rmd t + \begin{pmatrix}
0 & \sigma f_2^{'}(\vartheta) \\ 0 & 0
\end{pmatrix} w \circ \rmd W_t^1\,.
\end{align*}
We apply Lemma~\ref{zerointegral} by choosing $X_u = \vartheta_u$ and $g(\vartheta_u) = f_1'(\vartheta_u)- f_2'(\vartheta_u) $ to conclude that
\begin{equation*}
\int_0^t  f_1'(\vartheta_u)\, \rmd W_u  = \int_0^t  f_2'(\vartheta_u) \,\rmd W_u \quad  \text{almost surely}.
\end{equation*}
As we do not have an It\^{o}--Stratonovich correction in this case, we can infer that $v_t = w_t$ almost surely for all $t>0$.
\end{proof}
We view $f'$ in the weak sense, disregarding the points $\frac{1}{2}$ and $0$, and we define $f'(\vartheta) = \sign(\frac{1}{2}- \vartheta)$, where
\begin{equation*}
\sign (x) = \begin{cases} 1 \ &\text{if} \ x \geq 0\,, \\ -1 \ &\text{if} \ x < 0\,.
\end{cases}
\end{equation*}
By Proposition~\ref{Nullset},  $D \varphi (t, \omega, x)$  does not depend on the choice of $f'(\frac{1}{2})$, so the variational equation (\ref{varEqu}) becomes
\begin{equation} \label{specvarEqu}
\rmd v = \begin{pmatrix}
- \alpha & 0 \\ b & 0
\end{pmatrix} v \,\rmd t + \begin{pmatrix}
0 & \sigma \ \sign(\frac{1}{2} - \vartheta_t) \\ 0 & 0
\end{pmatrix} v \circ \rmd W_t^1\,.
\end{equation}
We can now derive the following formula for the first Lyapunov exponent under assumption \eqref{sumcondition}, including $m=1$ with $f$ given by $\eqref{fdef}$:
\begin{proposition} \label{Novelty}
The top Lyapunov exponent of system (\ref{model1}) with $ \sum_{i=1}^m f_i'(\vartheta)^2 =1$
is given by
\begin{equation} \label{SpecLyap}
\lambda_1 = \int_0^{\pi} q(\phi) p(\phi)\,\rmd \phi\,,
\end{equation}
where $q(\phi):=  - \alpha \cos^2 \phi + b \cos \phi \sin \phi + \frac{1}{2}\sigma^2 (1 - 2 \cos^2 \phi)\sin^2 \phi$, and $p(\phi)$ is the solution of the stationary Fokker--Planck equation $\mathcal{L}^* p = 0$. $\mathcal{L}^*$ is the formal $L^2$-adjoint of the generator $\mathcal{L}$, which is given by
\begin{equation}  \label{generator}
\mathcal{L}g(\phi)  = \left( d(\phi) + \frac{1}{2} \tilde c(\phi) \tilde c '(\phi) \right) g'(\phi) + \frac{1}{2} \tilde c^2(\phi) g''(\phi)\,,
\end{equation}
where $d=d(\phi)$ is defined as in \eqref{cd}, and $\tilde c(\phi):=\sigma \,\sin^2 \phi$. Hence, $\lambda_1$ is identical to the top Lyapunov exponent of the linear system
\begin{equation} \label{specvarEqusim}
\rmd v = \begin{pmatrix}
- \alpha & 0 \\ b & 0
\end{pmatrix} v \,\rmd t + \begin{pmatrix}
0 & \sigma \\ 0 & 0
\end{pmatrix} v \circ \rmd W_t^1\,.
\end{equation}
\end{proposition}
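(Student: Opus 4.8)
The plan is to reduce the three-dimensional invariant measure $\tilde\rho$ on $(\phi,\vartheta,y)$-space appearing in \eqref{GeneralLyap} to a one-dimensional invariant density $p(\phi)$ on $[0,\pi)$, using condition~\eqref{sumcondition} to decouple $\phi$ from $(\vartheta,y)$. First I would observe that, by \eqref{sumcondition}, the SDE~\eqref{GeneralPhi} for $\phi$ can be rewritten with a single effective noise: indeed $\sum_{i=1}^m c_i(\phi,\vartheta)\circ\rmd W^i_t = \sum_i \sigma f_i'(\vartheta)\sin^2\phi\circ\rmd W^i_t$, and since the functions $f_i'(\vartheta)$ satisfy $\sum_i f_i'(\vartheta)^2=1$, the process $\tilde W_t := \sum_{i=1}^m \int_0^t f_i'(\vartheta_u)\,\rmd W^i_u$ is, by Lévy's characterization, a standard one-dimensional Brownian motion (its quadratic variation is $\int_0^t \sum_i f_i'(\vartheta_u)^2\,\rmd u = t$). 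For the case $m=1$ with $f$ given by~\eqref{fdef}, this is exactly the content of the preceding discussion: $f'(\vartheta)^2=1$ a.e., so $\tilde W_t=\int_0^t\sign(\tfrac12-\vartheta_u)\,\rmd W^1_u$ is Brownian motion by Lemma~\ref{zerointegral} and the same Lévy argument, and Proposition~\ref{Nullset} guarantees the choice of representative is irrelevant. Hence in all cases the $\phi$-dynamics close up into the autonomous scalar Stratonovich SDE
\begin{equation*}
\rmd\phi = d(\phi)\,\rmd t - \sigma\sin^2\phi\circ\rmd\tilde W_t = d(\phi)\,\rmd t - \tilde c(\phi)\circ\rmd\tilde W_t\,.
\end{equation*}

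Next I would convert this to Itô form: the Stratonovich correction for $-\tilde c(\phi)\circ\rmd\tilde W_t$ is $\tfrac12\tilde c(\phi)\tilde c'(\phi)\,\rmd t$, yielding drift $d(\phi)+\tfrac12\tilde c(\phi)\tilde c'(\phi)$ and diffusion coefficient $\tilde c^2(\phi)$, which is precisely the generator~\eqref{generator}. Since the coefficients are smooth and $\pi$-periodic, this is a diffusion on the compact circle $\mathbb R/\pi\mathbb Z$; one then invokes existence and uniqueness of a stationary density $p$ solving $\mathcal L^*p=0$ (following the same Krylov--Bogolyubov / hypoellipticity considerations referenced for $\mu$ after the skew product is introduced — ergodicity must be checked but the noise is genuinely present away from $\phi\in\{0,\pi/2\}$ and the drift rotates through those points, so the process is not trapped). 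The marginal of $\tilde\rho$ on the $\phi$-coordinate is then an invariant measure for this diffusion, hence equals $p(\phi)\,\rmd\phi$. Finally, because $\sum_i f_i'(\vartheta)^2=1$, the integrand in~\eqref{GeneralLyap} reduces to the function $q(\phi)$ which depends on $\phi$ alone; integrating out $\vartheta$ and $y$ against $\tilde\rho$ leaves $\lambda_1=\int_0^\pi q(\phi)p(\phi)\,\rmd\phi$, which is~\eqref{SpecLyap}. The identification with the Lyapunov exponent of~\eqref{specvarEqusim} is then immediate: system~\eqref{specvarEqusim} has constant diffusion matrix $\begin{pmatrix}0&\sigma\\0&0\end{pmatrix}$, its angular process is exactly the $\phi$-SDE above with $\tilde W$ replaced by $W^1$, and its Furstenberg--Khasminskii integrand is the same $q(\phi)$ against the same stationary $p$.

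The main obstacle I anticipate is the rigorous justification that the $\phi$-marginal of the joint invariant measure $\tilde\rho$ genuinely coincides with the stationary density $p$ of the reduced scalar diffusion, together with the ergodicity/uniqueness of $p$ on the circle. One has to argue that the Lévy-reduction of the noise does not lose information at the level of invariant measures — i.e.\ that $(\phi_t)$ is genuinely Markov in its own filtration with the stated generator, not merely a time-changed or projected process — and that its stationary measure is unique so that the $\phi$-marginal of $\tilde\rho$ has no choice but to be $p$. A secondary subtlety, already essentially handled by Lemma~\ref{zerointegral} and Proposition~\ref{Nullset} in the $m=1$ case, is ensuring the non-smoothness of $f$ at $0,\tfrac12$ plays no role; for $m\ge2$ smoothness is assumed and this issue does not arise. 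Everything else — the Stratonovich-to-Itô conversion, the $\pi$-periodicity reduction from $\mathbb S^1$ to $[0,\pi)$, and the collapse of the integrand via~\eqref{sumcondition} — is routine.
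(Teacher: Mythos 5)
Your argument is correct and arrives at the stated formula, but it takes a genuinely different route from the paper at the key reduction step. The paper does not introduce an effective Brownian motion: it works directly with the It\^o form of \eqref{GeneralPhi} and computes the Kramers--Moyal coefficients
$a_n(\psi,t)=\lim_{\Delta t\to0}\tfrac{1}{\Delta t}\mathbb E\left[(\phi(t+\Delta t)-\phi(t))^n\mid\phi(t)=\psi\right]$
of the kinetic equation for the density of $\phi$, showing that $a_1=d+\tfrac12\tilde c\tilde c'$, $a_2=\tilde c^2$ and $a_n=0$ for $n\ge3$ once \eqref{sumcondition} is used to eliminate the $\vartheta$-dependence; this identifies the Fokker--Planck operator $\mathcal L^*$ and hence $p$ at the level of laws. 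Your route instead collapses the $m$ driving noises into the single continuous local martingale $\tilde W_t=\sum_i\int_0^t f_i'(\vartheta_u)\,\rmd W_u^i$, which is a standard Brownian motion by L\'evy's characterization precisely because of \eqref{sumcondition} (and, for $m=1$, because $\vartheta$ spends zero time at the kinks, as in Proposition~\ref{Nullset}). This buys you a pathwise statement --- $\phi$ literally solves the autonomous scalar Stratonovich SDE $\rmd\phi=d(\phi)\,\rmd t-\tilde c(\phi)\circ\rmd\tilde W_t$ --- which makes the final identification with the Lyapunov exponent of \eqref{specvarEqusim} immediate, whereas the paper's moment computation only identifies the one-dimensional marginal law (which suffices for \eqref{SpecLyap}). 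Your Stratonovich-to-It\^o bookkeeping is consistent with the paper's correction term $\tfrac12\tilde c\tilde c'$. Both approaches leave the same points implicit: uniqueness of the stationary density $p$ for the degenerate diffusion ($\tilde c$ vanishes at $\phi=0$ mod $\pi$, but the drift equals $b\neq0$ there, so the process is not trapped), and the resulting identification of the $\phi$-marginal of $\tilde\rho$ with $p\,\rmd\phi$; you correctly flag these, and they are settled by the reference invoked in Theorem~\ref{theol1l2}.
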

\begin{proof}
Consider the SDE for the process $\phi(t)$ in It\^o form
\begin{equation*}
\rmd \phi = r(\phi)\rmd t + \sum_{i=1}^m c_i(\phi, \vartheta) \rmd W_t^i\,,
\end{equation*}
where
\begin{equation*}
r(\phi) = d(\phi) + \frac{1}{2} \sum_{i=1}^m  c_i(\phi, \vartheta) c_i'(\phi, \vartheta)= d(\phi) + \frac{1}{2}\sum_{i=1}^m  f_i'(\vartheta)^2 c_i(\phi) c_i'(\phi)  =d(\phi) + \frac{1}{2} \tilde c(\phi) \tilde c'(\phi)\,.
\end{equation*}
Furthermore, recall that by assumption~\eqref{sumcondition}
$$ \sum_{i=1}^m c_i^2(\phi, \vartheta) = \tilde c^2(\phi) \,.$$
As the coefficients of the SDE are smooth in $\phi$, we consider the kinetic equation for the probability density function of the process $\phi(t)$ (cf.~\cite{s73})
\begin{equation*}
\frac{\partial p(\psi,t)}{\partial t} = \sum_{n=1}^{\infty} \frac{(-1)^n}{n !} \frac{\partial^n}{\partial \psi^n} [a_n(\psi,t)p(\psi,t)]\,,
\end{equation*}
where
\begin{equation*}
a_n(\psi,t) = \lim_{\Delta t \to 0} \frac{1}{\Delta t} \mathbb{E}\left[ (\phi(t + \Delta t) - \phi(t))^n \vert \phi(t) = \psi  \right] \fa n\in\N\,.
\end{equation*}
Pick $\Delta t$ small, denote $ \Delta W_t^i = W^i(t + \Delta) - W^i(t)$ and recall that $\mathbb{E}[\Delta W_t^i] =0$ and $\mathbb{E}[(\Delta W_t^i)^2] =\Delta t$ for all $i=1, \dots, m$. Observe that
\begin{equation*}
 \phi(t + \Delta t) - \phi(t) =  r(\phi(t)) \Delta t + \sum_{i=1}^m c_i(\phi(t), \vartheta(t)) \Delta W_t^i + o(\Delta t)\,,
\end{equation*}
and
\begin{align*}
\left(  \phi(t + \Delta t) - \phi(t) \right)^2 &= r^2(\phi(t)) (\Delta t)^2 + 2  \sum_{i=1}^m r(\phi(t))  c_i(\phi(t), \vartheta(t)) \Delta W_t^i \Delta t \\
&+ \sum_{i=1}^m c_i^2(\phi(t), \vartheta(t)) (\Delta W_t^i)^2 + \sum_{i,j=1, i\neq j}^m c_i(\phi(t), \vartheta(t)) c_j(\phi(t), \vartheta(t)) (\Delta W_t^j)(\Delta W_t^i) + o(\Delta t)\,.
\end{align*}
Since the $\Delta W_t^i$ and $\Delta W_t^j$ are independent from each other for $i \neq j$ and are all independent from $\phi(t)$ and $\vartheta(t)$, we obtain that
\begin{displaymath}
a_1(\psi,t) = r(\psi) \quad\text{and}\quad a_2(\psi,t) = \tilde c^2(\psi)\,.
\end{displaymath}
We can see immediately from above that $a_n(\psi,t) = 0$ for $n \geq 3$. This proves \eqref{generator}, and \eqref{SpecLyap} follows from \eqref{GeneralLyap}. It follows from the calculations that formula~\eqref{SpecLyap} also gives the top Lyapunov exponent for system~\eqref{specvarEqusim}.
\end{proof}

The following statement is now a direct corollary of \cite[Theorem 3]{IL2001}.
\begin{theorem}\label{theol1l2}
Consider the stochastic differential equation \eqref{model1}, where the function $f$ is of the form \eqref{fdef}. Then the two Lyapunov exponents are given by
\begin{align}
\lambda_1(\alpha,b, \sigma) &= - \frac{\alpha}{2} + \frac{ \left|b \sigma\right|}{2} \int_{0}^{\infty}  v \ m_{\sigma, b, \alpha}(v)  \,\rmd v\,, \label{topLyap}\\
\lambda_2(\alpha,b, \sigma) &= - \frac{\alpha}{2} - \frac{\left|b \sigma\right|}{2} \int_{0}^{\infty}  v \ m_{\sigma, b, \alpha}(v)  \,\rmd v\,. \label{secLyap}
\end{align}
where
\begin{equation}\label{msba}
m_{\sigma, b, \alpha}(v)= \frac{ \frac{1}{\sqrt{v}} \exp \left( - \frac{\left|b \sigma\right|}{6} v^3 + \frac{\alpha^2}{2\left|b \sigma\right|} v \right)  } {\int_{0}^{\infty} \frac{1}{\sqrt{u}}\exp \left( - \frac{\left|b \sigma\right|}{6} u^3 + \frac{\alpha^2}{2\left|b \sigma\right|} u \right)  \rmd u}.
\end{equation}
\end{theorem}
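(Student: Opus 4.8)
The plan is to reduce our problem to the one-dimensional projective SDE analysed in \cite{IL2001} by a clever change of coordinates that trivialises the projective dynamics. The key observation is that Proposition~\ref{Novelty} has already shown that $\lambda_1$ equals the top Lyapunov exponent of the \emph{constant-coefficient} linear system \eqref{specvarEqusim}; in particular the angular process $\phi$ satisfies the autonomous SDE \eqref{GeneralPhi} with $f_i'(\vartheta)^2$ replaced by $1$, and the formula \eqref{SpecLyap} expresses $\lambda_1$ purely in terms of the stationary density $p$ of $\phi$ on $[0,\pi)$. Thus everything has been decoupled from $\vartheta$ and $y$, and the question is genuinely about the two-dimensional linear RDS generated by \eqref{specvarEqusim}.

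First I would identify \eqref{specvarEqusim} with a system of the type treated in \cite{IL2001}: the drift matrix $A = \begin{pmatrix} -\alpha & 0 \\ b & 0 \end{pmatrix}$ has trace $-\alpha$ and the single noise matrix $B = \begin{pmatrix} 0 & \sigma \\ 0 & 0 \end{pmatrix}$ is nilpotent. Because $\trace A = \lambda_1 + \lambda_2 = -\alpha$, one immediately gets the relation $\lambda_2 = -\alpha - \lambda_1$, which is exactly the asserted pair \eqref{topLyap}--\eqref{secLyap} once \eqref{topLyap} is established, since $-\frac{\alpha}{2} \pm \frac{|b\sigma|}{2}\int_0^\infty v\, m_{\sigma,b,\alpha}(v)\,\rmd v$ indeed sum to $-\alpha$. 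So the real content is the single integral formula \eqref{topLyap} together with the identification of the invariant density $m_{\sigma,b,\alpha}$.

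Next I would carry out the substitution that turns the stationary Fokker--Planck equation $\mathcal{L}^* p = 0$ for $\phi$, with $\mathcal{L}$ as in \eqref{generator}, into an explicitly solvable first-order ODE. Writing the stationarity condition as a vanishing probability current, $\tfrac{1}{2}(\tilde c^2 p)' = (d + \tfrac12 \tilde c\tilde c')p$, i.e. $\tfrac12 \tilde c^2 p' = d\, p$, gives $p(\phi) \propto \tilde c(\phi)^{-2}\exp\!\big(\int 2 d/\tilde c^2\big)$ on the region where $\sin\phi \neq 0$, but here $\tilde c(\phi) = \sigma\sin^2\phi$ degenerates at $\phi = 0$, so one must be careful near the degenerate points and instead pass to the variable $v$ proportional to $\cot\phi$ (or $-\tan\phi\cdot$ something), which conjugates the process on $[0,\pi)$ to a diffusion on $(-\infty,\infty)$ or $[0,\infty)$; this is the standard Riccati-type change of variables behind the Furstenberg--Khasminskii analysis for such triangular systems. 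Under $v = (\text{const})\cot\phi$ with an appropriate rescaling absorbing $\alpha, b, \sigma$, the generator \eqref{generator} should transform into one whose stationary density is precisely $m_{\sigma,b,\alpha}(v) = c\, v^{-1/2}\exp(-\tfrac{|b\sigma|}{6}v^3 + \tfrac{\alpha^2}{2|b\sigma|}v)$ — the cubic term coming from the shear $b\cos^2\phi$ contribution to the drift $d$, and the linear term from $\alpha\cos\phi\sin\phi$. Substituting $q(\phi)p(\phi)$ from \eqref{SpecLyap} and expressing $q$ in the new variable, the integral $\int_0^\pi q\,p\,\rmd\phi$ collapses to $-\tfrac\alpha2 + \tfrac{|b\sigma|}{2}\int_0^\infty v\, m_{\sigma,b,\alpha}(v)\,\rmd v$ after an integration by parts that kills the $\cos^2\phi$ and $\sigma^2$ terms against the stationary equation, leaving only the shear term $b\cos\phi\sin\phi$, which becomes $|b\sigma| v$ in the new scale.

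The main obstacle I anticipate is twofold: first, pinning down the exact change of variables $\phi \mapsto v$ together with the scaling constants so that the pushed-forward stationary density matches \eqref{msba} on the nose, including the $v^{-1/2}$ prefactor (which signals that the natural variable is something like $v^2 \sim \cot\phi$, i.e. a square-root substitution is hidden in there) and the precise coefficients $\tfrac{|b\sigma|}{6}$ and $\tfrac{\alpha^2}{2|b\sigma|}$; and second, justifying the integration by parts across the degenerate points $\phi \in \{0\}$ (equivalently $v \to 0$ and $v\to\infty$), i.e. checking that the boundary terms vanish — this requires the integrability and decay statements that are implicit in \cite{IL2001} for the density $m_{\sigma,b,\alpha}$, which decays like a Gaussian-times-cubic at infinity and is integrable (with its $v^{-1/2}$ singularity) at $0$. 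Since the statement is phrased as ``a direct corollary of \cite[Theorem 3]{IL2001}'', the cleanest route is to verify that \eqref{specvarEqusim} is literally an instance of the family of planar linear SDEs for which that theorem gives \eqref{topLyap}--\eqref{msba}, matching the roles of the parameters $\alpha$, $b$, $\sigma$ to those in \cite{IL2001}, and then simply quote it; the bulk of the write-up is therefore this parameter-matching bookkeeping plus invoking Proposition~\ref{Novelty} to replace $f_i'(\vartheta)^2$ by $1$.
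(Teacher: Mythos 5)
Your final recommendation --- reduce to \eqref{specvarEqusim} via Proposition~\ref{Novelty}, check that it is literally an instance of the planar linear SDEs covered by \cite[Theorem 3]{IL2001}, and quote that theorem --- is exactly the paper's proof; the only detail you leave implicit is the explicit Lyapunov-exponent-preserving linear conjugation $\hat v = \bigl(v_2, \tfrac{v_1}{\sigma}\bigr)$ that brings \eqref{specvarEqusim} into the canonical form $\rmd v = \begin{pmatrix} 0 & \sigma b \\ 0 & -\alpha \end{pmatrix} v\,\rmd t + \begin{pmatrix} 0 & 0 \\ 1 & 0 \end{pmatrix} v \circ \rmd W_t^1$ required there. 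Your trace observation ($\lambda_1+\lambda_2=\trace A=-\alpha$, since the noise matrix is nilpotent) correctly accounts for \eqref{secLyap}, and the alternative self-contained Furstenberg--Khasminskii/Riccati derivation you sketch is the computation underlying \cite{IL2001} but is not needed once the theorem is quoted.
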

\begin{proof}
Replacing $v = \begin{pmatrix}
v_1 \\ v_2
\end{pmatrix}$
by
$\hat v = \begin{pmatrix}
v_2 \\ \frac{v_1}{\sigma}
\end{pmatrix}$ leaves the Lyapunov exponents invariant and transforms \eqref{specvarEqusim} into the equation
 \begin{equation} \label{specvarnew}
\rmd v = \begin{pmatrix}
0& \sigma b \\ 0 & - \alpha
\end{pmatrix} v \,\rmd t + \begin{pmatrix}
0 & 0 \\ 1 & 0
\end{pmatrix} v \circ \rmd W_t^1\,.
\end{equation}
The matrices in this equation satisfy the assumptions of \cite[Theorem 3]{IL2001} which gives formulas~\eqref{topLyap} and~\eqref{secLyap}.
\end{proof}

\section{Bifurcation from negative to positive top Lyapunov exponent} \label{bifurresult}
We now use Theorem~\ref{theol1l2} to prove Theorem~\ref{maintheorem}, which asserts that there is a bifurcation from negative to positive Lyapunov exponent for the stochastic differential equation \eqref{model1}.

\begin{theorem-non1}
Consider the SDE \eqref{model1} with $f_i$, $i=1,\dots,m$, satisfying condition~\eqref{sumcondition}. Then there is $c_0 \approx 0.2823$ such that for all $\alpha > 0$ and $b \neq 0$ the number  $ \sigma_{0}(\alpha, b) = \frac{\alpha^{3/2}}{c_0^{1/2} \left|b\right|} > 0$ is the unique value of $\sigma$ where the top Lyapunov exponent $\lambda_1(\alpha, b,\sigma)$ of (\ref{model1}) changes its sign. In more detail, we have
\begin{equation*}
\lambda_1(\alpha, b,\sigma) \begin{cases}
& < 0 \quad \text{if} \ 0 < \sigma < \sigma_0(\alpha, b)\,, \\
& = 0 \quad \text{if} \ \sigma = \sigma_0(\alpha, b)\,, \\
& > 0 \quad \text{if} \ \sigma > \sigma_0(\alpha, b)\,.
\end{cases}
\end{equation*}
\end{theorem-non1}

\begin{proof}
We fix $\alpha >0$ and $b\not= 0$.
Introducing the change of variables $v= \frac{\alpha}{\left|b \sigma\right|} u$ in~\eqref{msba}, we obtain
\begin{equation}\label{lambda1mainthm}
\lambda_1(\alpha, b, \sigma) = \frac{\alpha}{2} \left( \int_{0}^{\infty}  u \ \tilde{m}_{\sigma,b, \alpha}(u)  \,\rmd u -1 \right),
\end{equation}
where
\begin{equation*}
\tilde{m}_{\sigma,b , \alpha}(u)= \frac{\frac{1}{\sqrt{u}} \exp \left( -  \frac{\alpha^3}{\sigma^2 b^2} \left[ \frac{1}{6} u^3 - \frac{1}{2} u \right] \right)}{\int_0^{\infty} \frac{1}{\sqrt{w}} \exp \left( -  \frac{\alpha^3}{\sigma^2 b^2} \left[ \frac{1}{6} w^3 - \frac{1}{2} w \right] \right)\,\rmd w}\,.
\end{equation*}
Defining $c := \frac{\alpha^3}{\sigma^2 b^2}$, we observe that $\lambda_1(\alpha, b, \sigma)$ has the same sign as the function $G:(0, \infty) \to \mathbb{R}$ given by
\begin{equation}
G(c) := \int_{0}^{\infty} \left( \sqrt{u} - \frac{1}{\sqrt{u}} \right) \exp \left( -  c \left[ \frac{1}{6} u^3 - \frac{1}{2} u \right] \right) \rmd u\,.
\end{equation}
Using dominated convergence, we may interchange the order of differentiation and integration and consider
$$ G'(c) = \int_{0}^{\infty} h_1(u) h_2(u) \exp \left( c \, h_2(u) \right) \rmd u\,, \ h_1(u) = \left( \sqrt{u} - \frac{1}{\sqrt{u}} \right)\,, \ h_2(u) = -\frac{1}{6} u^3 + \frac{1}{2} u\,.$$
Note that $h_1 h_2$, and thereby the integrand, has positive sign on the interval $(1,\sqrt{3})$ and negative sign on $(0,1)$ and $(\sqrt{3}, \infty)$. Basic claculations show that we have
$ \left| h_1(1-\delta) \right| > h_1(1+\delta) \text{ and } \ h_2(1-\delta) > h_2(1+\delta)$
for all  $\delta \in (0,\sqrt{3}-1)$. It follows that
$$ G'(c) < \int_{2-\sqrt{3}}^{\sqrt{3}} h_1(u) h_2(u) \exp \left( c \, h_2(u) \right) \rmd u < 0 \fa c \in (0, \infty)\,.$$
Hence, $G$ is strictly decreasing. Furthermore, we observe that $G(c) \to \infty$ as $c \searrow 0$ (using monotone convergence on $[\sqrt{3}, \infty)$) and that $G(c) \to -\infty$ as $c \to \infty$ (using similar arguments as for $G'$ and monotone convergence on $(0, 2-\sqrt{3})$).

Combining these observations, we may conclude that there is a unique $c_0$ such that $G(c_0) = 0$, $G(c) > 0$ for all $c \in (0, c_0)$ and $G(c) < 0$ for all $c \in (c_0, \infty)$. This proves the claim with $\sigma_0(\alpha,b) = \frac{\alpha^{3/2}}{c_0^{1/2} \left|b\right|}$. Numerical integration gives $c_0 \approx 0.2823$.
\end{proof}

\begin{remark}
  As explained in the Introduction, the same result holds if we interchange the roles of $\sigma$ and $b$. This can be seen also directly from the proof above.
\end{remark}

\begin{remark}
The random dynamical system induced by \eqref{model1} has a random set attractor $\{\tilde A(\omega)\}_{\omega \in \Omega}$
(see \cite[Definition 14.3]{rk04} for a formal definition) for all parameter values,
as can be seen similarly to \cite{ks98}.
The disintegrations $\mu_{\omega}$ of the ergodic invariant measure $\mu$ are supported on the fibers $\tilde A(\omega)$. In fact, the measurable random compact set $\{A(\omega)\}_{\omega \in \Omega}$ with fibers $A(\omega) = \supp(\mu_{\omega}) \subset \tilde A(\omega)$ is a minimal (weak) random point attractor of \eqref{model1} by \cite{fgs16}[Proposition 2.20 (1)].

The fact that $\{A(\omega)\}_{\omega \in \Omega}$ is a singleton almost surely if $\lambda_1 <0$, follows from a slightly modified reasoning alongside \cite[Theorem 2.23]{fgs16} and its proof. In the case of $\lambda_1 >0$, we deduce that $\mu_{\omega}$ is atomless almost surely as in the proof of \cite[Remark 4.12]{b91}. Hence, Theorem~\ref{maintheorem} implies the bifurcation from an attractive random equilibrium to an atomless random point attractor (also called random strange attractor).
\end{remark}

The positive top Lyapunov exponent is the only characterization of chaos we can give in this case as an analysis in the sense of \cite{wy03} seems not feasible for white noise. However, the geometric mechanism of shear-induced chaos can still be understood along the same lines: the white noise drives some points on the limit cycle up and some down. Due to the phase amplitude coupling $b$, the points with larger $y$-coordinates move faster in the $\vartheta$-direction. At the same time, the dissipation force with strength $\alpha$ attracts the curve back to the limit cycles. This provides a mechanism for stretching and folding characteristic of chaos. The transition to chaos in the continuous time stochastic forcing is much faster than in the case of periodic kicks due to the effect of large deviations \cite{ly08}. This is due to the fact that points end up in areas with arbitrarily large values of $y$ with positive probability. Hence, not so much shear is needed to generate the described stretching and folding due to phase amplitude coupling. However, for very small shear and noise, the dissipation leads to sinks being formed between these large deviation events, and the attractor ends up to be a singleton.

In Figure~\ref{fig1234}, we show the top Lyapunov exponent as a function of $\sigma$ for fixed $b$ and $\alpha$ according to formula \eqref{topLyap}. We have used numerical integration up to machine precision to calculate $\lambda_1$. The bifurcations of the sign of $\lambda_1$ at $\sigma_0(\alpha,b)$ is clearly seen in Figures~\ref{fig_1}-\ref{fig_3}. Furthermore, note that $\lambda_1 \to 0$ from below for $\sigma \to 0$. The figures illustrate that $\sigma_{0}(\alpha, b)$ is an increasing function of $\alpha$ and a decreasing function of $b$, or differently phrased: the larger the proportion of shear to dissipation, $b/\alpha$,  the smaller the bifurcation point $\sigma_0(\alpha,b)$.  In Figure~\ref{fig_4}, we choose small values of $b$ and $\alpha$, but $b/\alpha$ large. We see no negative values of $\lambda_1$ as we would have to take values of $\sigma$ too small for the numerical integration.

We have chosen the same parameter regimes as in \cite{ly08}, where Lin and Young investigate numerically the Lyapunov exponents of the system given by
\begin{align} \label{youngmodel}
\begin{array}{r@{\;\,=\;\,}l}
\rmd y & - \alpha y \rmd t + \sigma \sin(2 \pi \vartheta) \circ \rmd W_t\,, \\
\rmd \vartheta & (1 + b y) \,\rmd t\,,
\end{array}
\end{align}
taking $\vartheta \in [0, 2 \pi]$. Their numerical results show exactly the same qualitative behaviour apart from a slightly different scaling due to the factor $2 \pi$. Note that our setting contains two approximations of model~\eqref{youngmodel}. The first option is to take also one Brownian motion, i.e. $m=1$ in~\eqref{model1}, and $f_1$ continuous and piecewise linear with constant absolute value of the derivative almost everywhere. The accordance of the numerics for~\eqref{youngmodel} and our results show that the simpler choice of the diffusion coefficient in our case does not change the qualitative behaviour. This is not a surprise, since we can derive formula (\ref{topLyap}) for $\lambda_1$ if we choose $f$ to be piecewise linear on the intervals $[\frac{i}{4}, \frac{i+1}{4}]$ for $i=0,1,2,3$ with $\left| f' \right|$ constant such that it represents a linear approximation of the sine function.
\begin{figure}[H]
\centering
\begin{subfigure}{.45\textwidth}
  \centering
  \includegraphics[width=1\linewidth]{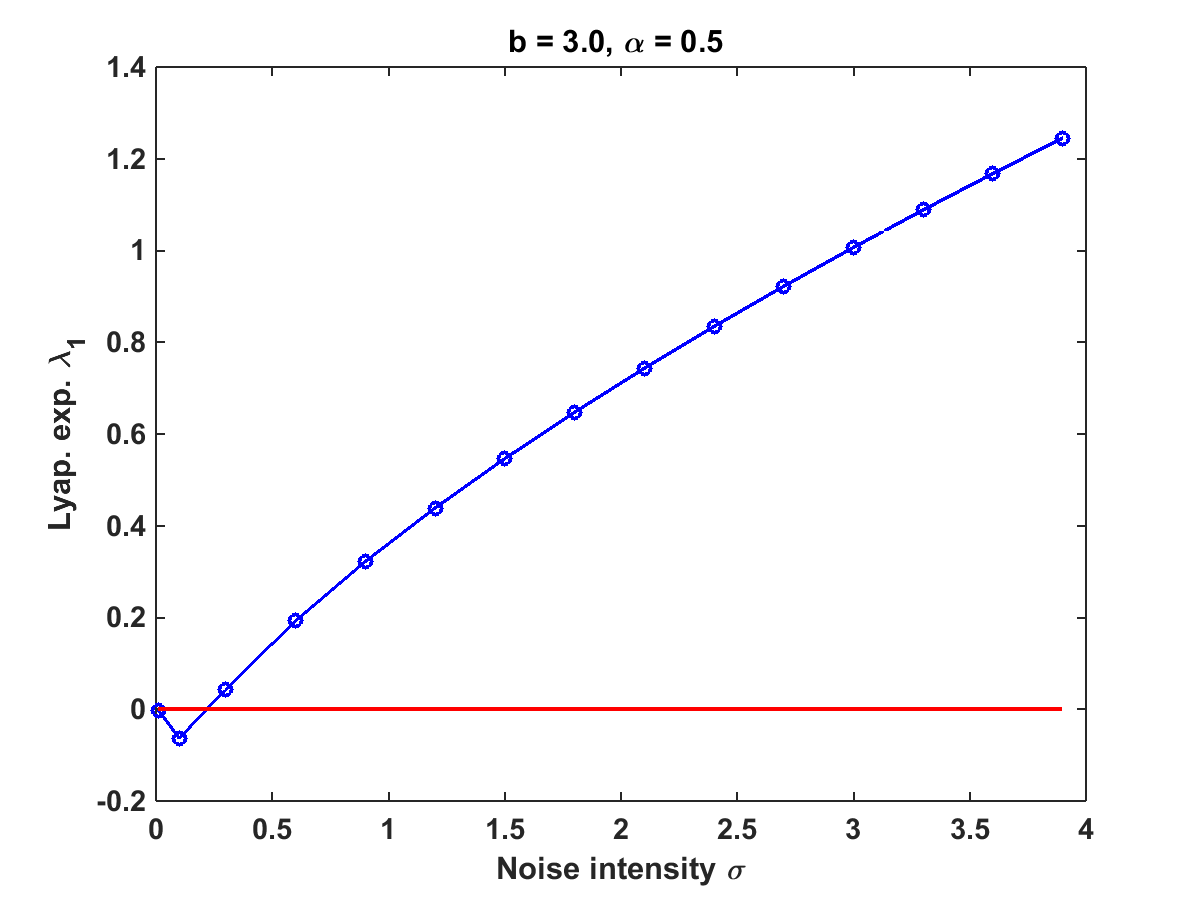}
   \caption{}
  \label{fig_1}
\end{subfigure}%
\begin{subfigure}{.45\textwidth}
  \centering
  \includegraphics[width=1\linewidth]{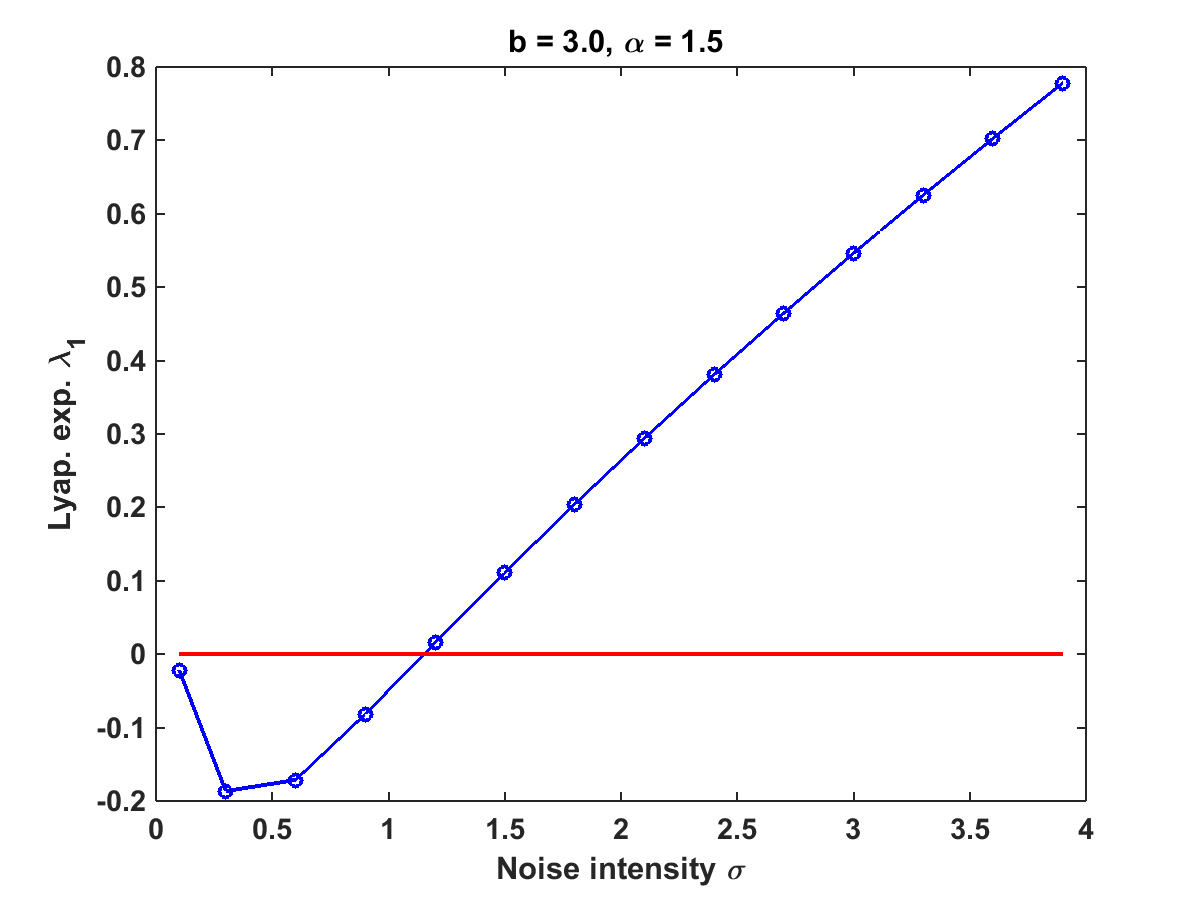}
   \caption{}
  \label{fig_2}
\end{subfigure}
\newline
\begin{subfigure}{.45\textwidth}
  \centering
  \includegraphics[width=1\linewidth]{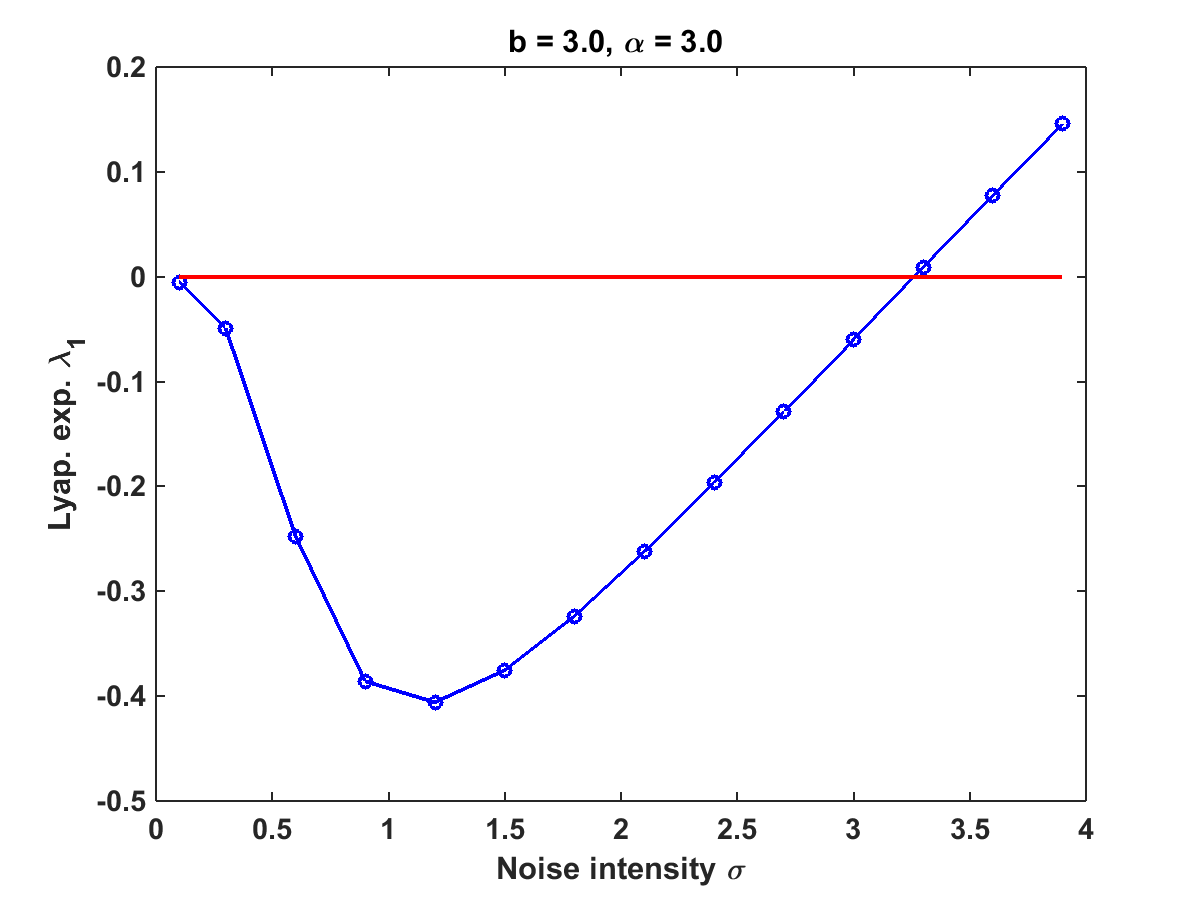}
   \caption{}
  \label{fig_3}
\end{subfigure}%
\begin{subfigure}{.45\textwidth}
  \centering
  \includegraphics[width=1\linewidth]{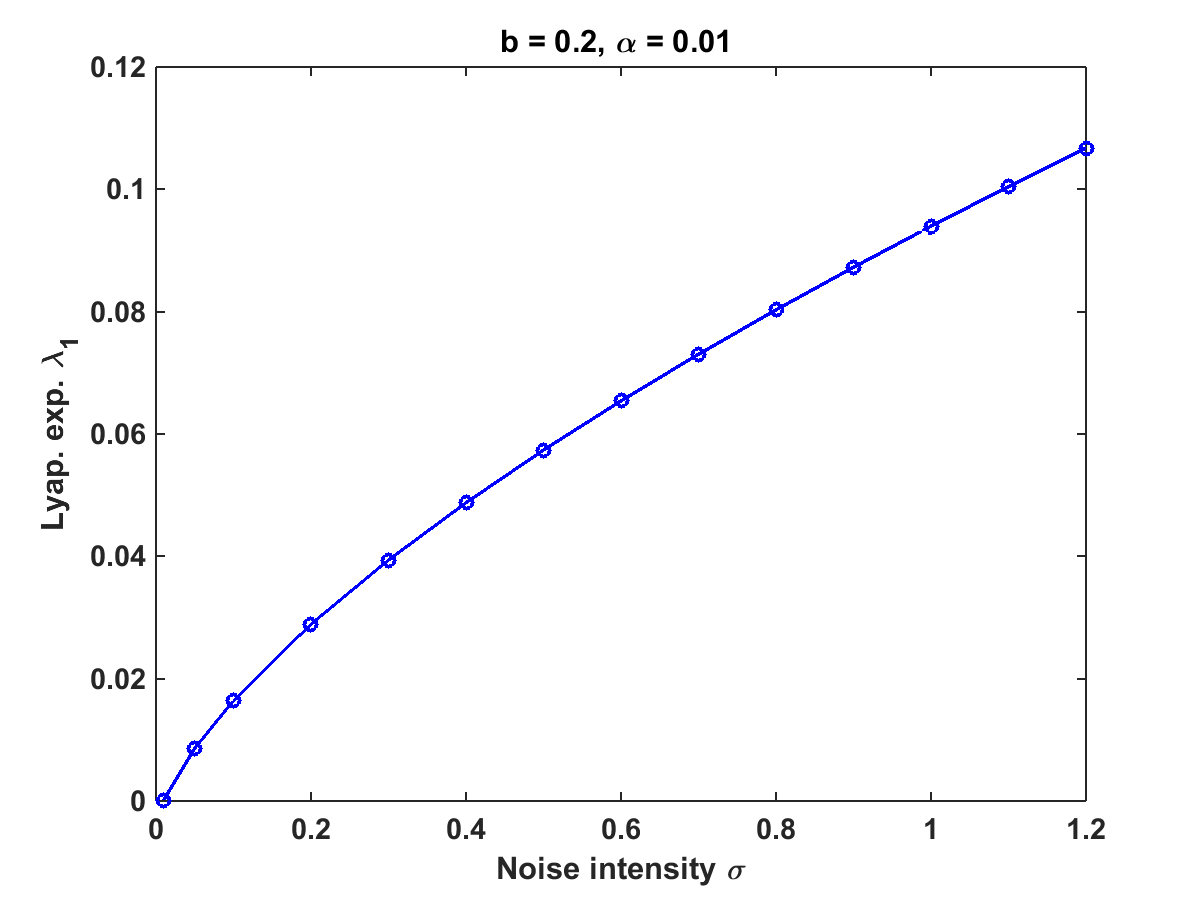}
  \caption{}
  \label{fig_4}
\end{subfigure}
\caption{The top Lyapunov exponent $\lambda_1$ as a function of $\sigma$ for fixed $b$ and $\alpha$. The dots indicate the values of $\lambda_1$ that were calculated according to \eqref{topLyap} using numerical integration. Figures~\ref{fig_1}-\ref{fig_3} illustrate that $\sigma_{0}(\alpha, b)$ increases monotonously in $\alpha$. In Figure~\ref{fig_4}, $b$ and $\alpha$ are small, but $b/\alpha$ is large. We don't see the transition to $\lambda_1 <0$ since we would have to take values of $\sigma$ too small for the numerical integration.}
\label{fig1234}
\end{figure}
The second option is given by~\eqref{fdef2} which adds an additional Brownian motion with the cosine function as diffusion coefficient. As we have seen, this slightly extended model does not change the qualitative behaviour and validates analytically Lin and Young's numerical investigations for model~\eqref{youngmodel}.

Furthermore, we confirm Lin and Young's conjecture concerning a scaling property of $\lambda_1$ with respect to the parameters. For model~\eqref{youngmodel} they observed numerically that, under the transformations $\alpha \mapsto k \alpha$, $b \mapsto k b$ and $\sigma \mapsto \sqrt{k} \sigma$, $\lambda_1$ transforms approximately as $\lambda_1 \mapsto k \lambda_1$. This scaling property holds exactly for our model~\eqref{model1}.
\begin{proposition}
Consider the stochastic differential equation \eqref{model1}, where the function $f$ is of the form \eqref{fdef}. Then the top Lyapunov exponent $\lambda_1$ as given by \eqref{topLyap} satisfies
\begin{equation} \label{scaling}
\lambda_1(k \alpha, k b, \sqrt{k} \sigma) = k \lambda_1(\alpha, b, \sigma) \quad \fa k \in \mathbb{R}^+ \setminus \{0\} \,.
\end{equation}
\end{proposition}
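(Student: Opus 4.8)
The plan is to read off the scaling directly from the closed-form expression for $\lambda_1$ established in the proof of Theorem~1.1, rather than from \eqref{topLyap}. Recall that equation~\eqref{lambda1mainthm} gives
\begin{equation*}
\lambda_1(\alpha,b,\sigma) = \frac{\alpha}{2}\left(\int_0^{\infty} u\,\tilde m_{\sigma,b,\alpha}(u)\,\rmd u - 1\right),
\end{equation*}
and that the density $\tilde m_{\sigma,b,\alpha}$ depends on the parameter triple $(\alpha,b,\sigma)$ only through the single combination $c := \frac{\alpha^3}{\sigma^2 b^2}$. So the first step — which is really the only one — is to check that $c$ is invariant under the rescaling: under $(\alpha,b,\sigma)\mapsto(k\alpha,kb,\sqrt k\,\sigma)$ one has
\begin{equation*}
\frac{(k\alpha)^3}{(\sqrt k\,\sigma)^2\,(kb)^2} = \frac{k^3\alpha^3}{k\cdot k^2\cdot\sigma^2 b^2} = \frac{\alpha^3}{\sigma^2 b^2}.
\end{equation*}
Consequently $\tilde m_{\sqrt k\sigma,\,kb,\,k\alpha}$ and $\tilde m_{\sigma,b,\alpha}$ coincide as functions, so the integral $\int_0^{\infty} u\,\tilde m\,\rmd u$ is unchanged, whereas the prefactor $\frac{\alpha}{2}$ is multiplied by $k$; the product therefore picks up exactly the factor $k$, which is \eqref{scaling}. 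There is no genuine obstacle: the content of the proof is the one-line invariance of $c$, and everything else is immediate from the formula already in hand.

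I would also record the structural reason why the scaling is exact rather than merely approximate, since this is the point of interest compared with Lin and Young's numerics. By Proposition~\ref{Novelty}, $\lambda_1$ equals the top Lyapunov exponent of the autonomous linear Stratonovich system \eqref{specvarEqusim}. Apply to \eqref{specvarEqusim} with parameters $(k\alpha,kb,\sqrt k\sigma)$ the deterministic time change $\tau = kt$ together with the Brownian rescaling $\widehat W^1_\tau := \sqrt k\,W^1_{\tau/k}$ (again a standard Wiener process): the drift matrix gets multiplied by $k^{-1}$ (from $\rmd t = k^{-1}\rmd\tau$) and the diffusion matrix by $k^{-1/2}$ (from $\circ\,\rmd W^1_t = k^{-1/2}\circ\rmd\widehat W^1_\tau$), which reproduces exactly \eqref{specvarEqusim} with the original parameters, now driven by $\widehat W^1$ in $\tau$-time. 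Hence a solution $v^{(k)}_t$ of the rescaled system equals $v_{kt}$, where $v_\tau$ solves the original system driven by $\widehat W^1$, and therefore
\begin{equation*}
\lambda_1(k\alpha,kb,\sqrt k\sigma) = \lim_{t\to\infty}\frac1t\log\|v^{(k)}_t\| = \lim_{t\to\infty}\frac1t\log\|v_{kt}\| = k\lim_{\tau\to\infty}\frac1\tau\log\|v_\tau\| = k\,\lambda_1(\alpha,b,\sigma),
\end{equation*}
using that the Lyapunov exponent is the same deterministic constant for almost every driving path. I would present the first argument as the formal proof and mention the second as a remark.
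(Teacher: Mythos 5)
Your proposal is correct and in fact contains two complete proofs. The paper's own proof is precisely your closing remark: it invokes Proposition~\ref{Novelty} to identify $\lambda_1$ with the top Lyapunov exponent of the constant-coefficient linear system \eqref{specvarEqusim} and then says the claim ``follows immediately'' from the time change $t\mapsto kt$; you correctly supply the two details this suppresses, namely the accompanying Brownian rescaling $\widehat W^1_\tau=\sqrt{k}\,W^1_{\tau/k}$ (so the drift matrix picks up a factor $k^{-1}$ and the diffusion matrix a factor $k^{-1/2}$, mapping the parameters $(k\alpha,kb,\sqrt{k}\sigma)$ back to $(\alpha,b,\sigma)$) and the fact that the Lyapunov exponent is an almost-sure constant, so reparametrising time multiplies it by $k$. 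Your designated formal proof is a genuinely different, purely computational route: since the statement refers to $\lambda_1$ ``as given by \eqref{topLyap}'', you use the equivalent form \eqref{lambda1mainthm}, in which $\lambda_1=\frac{\alpha}{2}\bigl(\int_0^\infty u\,\tilde m_{\sigma,b,\alpha}(u)\,\rmd u-1\bigr)$ and $\tilde m_{\sigma,b,\alpha}$ depends on the parameters only through $c=\alpha^3/(\sigma^2 b^2)$; the invariance of $c$ under the rescaling is a one-line check, and the prefactor $\alpha/2$ supplies the factor $k$. Both arguments are sound and non-circular (equation \eqref{lambda1mainthm} is already available at this point in the paper). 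The time-change argument is structurally more illuminating --- it shows the scaling is exact for any noise coefficients satisfying \eqref{sumcondition}, independently of the explicit density \eqref{msba} --- while your formula-based argument is the shorter, more self-contained verification.
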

\begin{proof}
Recall from Proposition~\ref{Novelty} that $\lambda_1$ can be calculated as the top Lyapunov exponent of the linear system~\eqref{specvarEqusim}. Then the claim follows immediately if we conduct the time change $ t \mapsto kt$ in this equation.
\end{proof}
\section{Summary and outlook} \label{summary}
We have investigated systems with limit cycles on a cylinder perturbed by white noise. We were able to show a transition from negative to positive top Lyapunov exponents for fixed dissipation parameter $\alpha$ and big enough noise $\sigma$ and/or shear $b$. This implies a bifurcation of the random attractor from a random equilibrium to random strange attractor.

In the case of positive Lyapunov exponents, it remains an open problem to describe the attractor using concepts from ergodic theory, as entropy and SRB measures \cite{ly88, wy03}, in order to have a more rigorous notion of chaos.

The results of this paper may well be relevant to shed more light on the problem of stochastic Hopf bifurcation, where numerical studies indicate a transition from negative to positive Lyapunov exponent as explained in the Introduction.

\section*{Acknowledgments} The authors would like to thank the referee for carefully reading our manuscript and for giving such constructive comments which substantially helped improving the quality of the paper, in particular strengthening the main result. We also express exceptional gratitude to Martin Hairer and Nils Berglund for suggesting a very valuable generalization of the original model. In addition, the authors thank Alexis Arnaudon, Darryl Holm, Aleksandar Mijatovic, Nikolas N{\"u}sken, Grigorios Pavliotis and Sebastian Wieczorek for useful discussions. Maximilian Engel was supported by a Roth Scholarship from the Department of Mathematics at Imperial College London and the SFB Transregio 109 "Discretization in Geometry and Dynamcis" sponsored by the German Research Foundation (DFG). Jeroen S.W.~Lamb acknowledges the support by Nizhny Novgorod University through the grant RNF 14-41-00044, and Martin Rasmussen was supported by an EPSRC Career Acceleration Fellowship EP/I004165/1. This research has also been supported by EU Marie-Curie IRSES Brazilian-European Partnership in Dynamical Systems (FP7-PEOPLE-2012-IRSES 318999 BREUDS) and EU Marie-Sk\l odowska-Curie ITN Critical Transitions in Complex Systems (H2020-MSCA-2014-ITN 643073 CRITICS).


\bibliographystyle{plain}

\bibliography{mybibfile}

\begin{thebibliography}{10}

\bibitem{acd16}
A.~Arnaudon, A.L. De~Castro, and D.D. Holm.
\newblock Noise and dissipation on coadjoint orbits.
\newblock {\em arXiv1601.02249[math.PR]}, 2016.

\bibitem{a98}
L.~Arnold.
\newblock {\em Random Dynamical Systems}.
\newblock Springer, Berlin, 1998.

\bibitem{ass96}
L.~Arnold, N.~Sri~Namachchivaya, and K.~R. Schenk-Hopp\'{e}.
\newblock Toward an understanding of stochastic {H}opf bifurcation: A case
  study.
\newblock {\em International Journal of Bifurcation and Chaos},
  6(11):1947--1975, 1996.

\bibitem{b91}
P.H. Baxendale.
\newblock Statistical equilibrium and two-point motion for a stochastic flow of
  diffeomorphisms.
\newblock In {\em Spatial stochastic processes}, volume~19 of {\em Progress in
  Probability}, pages 189--218. Birkh\"auser Boston, Boston, MA, 1991.

\bibitem{b94}
P.H. Baxendale.
\newblock A stochastic {H}opf bifurcation.
\newblock {\em Probability Theory and Related Fields}, 99:581--616, 1994.

\bibitem{b04}
P.H. Baxendale.
\newblock Stochastic averaging and asymptotic behavior of the stochastic
  {D}uffing-van der {P}ol equation.
\newblock {\em Stochastic Processes and their Applications}, 113:235--272,
  2004.

\bibitem{bg02}
P.H. Baxendale and L.~Goukasian.
\newblock Lyapunov exponents for small perturbations of {H}amiltonian systems.
\newblock {\em Annals of Probability}, 30:101--134, 2002.

\bibitem{cdlr16}
M.~Callaway, T.S. Doan, J.S.W. Lamb, and M.~Rasmussen.
\newblock The dichotomy spectrum for random dynamical systems and pitchfork
  bifurcations with additive noise.
\newblock {\em Annales de l'Institut Henri Poincar\'e, Probabilit\'es et
  Statistiques}, 53(4):1548--1574, 2017.

\bibitem{cf98}
H.~Crauel and F.~Flandoli.
\newblock Additive noise destroys a pitchfork bifurcation.
\newblock {\em Journal of Dynamics and Differential Equations}, 10(2):259--274,
  1996.

\bibitem{dsr11}
L.~Deville, N.~Sri~Namachchivaya, and Z.~Rapti.
\newblock Stability of a stochastic two-dimensional non-{H}amiltonian system.
\newblock {\em Journal of Applied Mathematics}, 71(4):1458--1475, 2011.

\bibitem{delr17}
T.S. Doan, M.~Engel, J.S.W. Lamb, and M.~Rasmussen.
\newblock Hopf bifurcation with additive noise.
\newblock {\em arXiv:1710.09649v1[math.DS]}, 2007.

\bibitem{fgs16}
F.~Flandoli, B.~Gess, and M.~Scheutzow.
\newblock Synchronization by noise.
\newblock {\em Probability Theory and Related Fields}, 168(3--4):511--556,
  2017.

\bibitem{IL99}
P.~Imkeller and C.~Lederer.
\newblock An explicit description of the {L}yapunov exponents of the noisy
  damped harmonic oscillator.
\newblock {\em Dynamics and Stability of Systems}, 14(4):385--405, 1999.

\bibitem{IL2001}
P.~Imkeller and C.~Lederer.
\newblock Some formulas for {L}yapunov exponents and rotation numbers in two
  dimensions and the stability of the harmonic oscillator and the inverted
  pendulum.
\newblock {\em Dynamical Systems}, 16(1):29--61, 2001.

\bibitem{ks98}
H.~Keller and B~Schmalfuss.
\newblock Attractors for stochastic differential equations with nontrivial
  noise.
\newblock {\em Buletinul A.S. a R.M. Matematica}, 26(1):43--54, 1998.

\bibitem{rk04}
P.E. Kloeden and M.~Rasmussen.
\newblock {\em Nonautonomous dynamical systems}, volume 176 of {\em
  Mathematical Surveys and Monographs}.
\newblock American Mathematical Society, Providence, RI, 2011.

\bibitem{Lamb_15_1}
J.S.W. Lamb, M.~Rasmussen, and C.S. Rodrigues.
\newblock Topological bifurcations of minimal invariant sets for set-valued
  dynamical systems.
\newblock {\em Proceedings of the American Mathematical Society}, 143(9), 2015.

\bibitem{lj87}
Y.~Le~Jan.
\newblock Equilibre statistique pour les produits de diffeomorphismes
  aleatoires independants.
\newblock {\em Annales de l'Institut Henri Poincar\'e, Probabilit\'es et
  Statistiques}, 23(1):111--120, 1987.

\bibitem{ly88}
F.~Ledrappier and L.-S. Young.
\newblock Entropy formula for random transformations.
\newblock {\em Probability Theory and Related Fields}, 80:217--240, 1988.

\bibitem{ls12}
Z.~Lian and M.~Stenlund.
\newblock Positive {L}yapunov exponent by a random perturbation.
\newblock {\em Dynamical Systems}, 27(2):239--252, 2012.

\bibitem{ly08}
K.~K. Lin and L.-S. Young.
\newblock Shear-induced chaos.
\newblock {\em Nonlinearity}, 21:899--922, 2008.

\bibitem{nj15}
J.~Newman.
\newblock Necessary and sufficient conditions for stable synchronisation in
  random dyncamical systems.
\newblock {\em arXiv1408.5599v3[math.PR]}, 2015.

\bibitem{nj16}
J.~Newman.
\newblock Synchronisation of almost all trajectories of a random dynamical
  system.
\newblock {\em arXiv1511.08831v2[math.PR]}, 2015.

\bibitem{os10}
W.~Ott and M.~Stenlund.
\newblock From limit cycles to strange attractors.
\newblock {\em Communications in Mathematical Physics}, 296(1):215--249, 2010.

\bibitem{sh96}
K.~R. Schenk-Hopp\'{e}.
\newblock Bifurcation scenarios of the noisy {D}uffing-van der {P}ol
  oscillator.
\newblock {\em Nonlinear Dynamics}, 11:255--274, 1996.

\bibitem{s73}
T.T. Soong.
\newblock {\em Random differential equations in science and engineering}.
\newblock Mathematics in Science and Engineering 103. Academic Press, New York
  and London, 1973.

\bibitem{wy02}
Q.~Wang and L.-S. Young.
\newblock From invariant curves to strange attractors.
\newblock {\em Communications in Mathematical Physics}, 225(2):275--304, 2002.

\bibitem{wy03}
Q.~Wang and L.-S. Young.
\newblock Strange attractors in periodically-kicked limit cycles and {H}opf
  bifurcations.
\newblock {\em Communications in Mathematical Physics}, 240(3):509--529, 2003.

\bibitem{w09}
S.~Wieczorek.
\newblock Stochastic bifurcation in noise-driven lasers and {H}opf oscillators.
\newblock {\em Physical Review E}, 79:1--10, 2009.

\bibitem{y08}
L.-S. Young.
\newblock Chaotic phenomena in three settings: large, noisy and out of
  equilibrium.
\newblock {\em Nonlinearity}, 21:245--252, 2008.

\bibitem{Zmarrou_07_1}
H.~Zmarrou and A.J. Homburg.
\newblock Bifurcations of stationary measures of random diffeomorphisms.
\newblock {\em Ergodic Theory and Dynamical Systems}, 27(5):1651--1692, 2007.

\end{thebibliography}

\end{document}